\numberwithin{equation}{section}
\newtheorem{definition}{Definition}[section]
\newtheorem{lemma}[definition]{Lemma}
\newtheorem{theorem}[definition]{Theorem}
\newtheorem{corollary}[definition]{Corollary}
\newtheorem{proposition}[definition]{Proposition}
\newtheorem{em-example}[definition]{Example}
\newtheorem{em-def}[definition]{Definition}
\newtheorem{em-remark}[definition]{Remark}
\newtheorem{em-question}[definition]{Question}
\newenvironment{remark}{\begin{em-remark} \em }{\end{em-remark}}
\newcommand{\Q}{\mathbb Q}
\newcommand{\Z}{\mathbb Z}
\newcommand{\qc}{\mathbb{Z}(p^{\infty})}
\newcommand{\X}{\mathcal{X}}
\newcommand{\B}{\mathcal{B}}
\newcommand{\EB}{\mathcal{EB}}
\newcommand{\RH}{\mathcal{RH}}
\newcommand{\WH}{\mathcal{WH}}
\newcommand{\DF}{\mathcal{DF}}
\def\Ker{\mathrm{Ker}}
\def\P{\mathbb{P}}
\begin{document}

\author[P. Danchev]{Peter Danchev}
\address{Institute of Mathematics and Informatics, Bulgarian Academy of Sciences, 1113 Sofia, Bulgaria}
\email{danchev@math.bas.bg; pvdanchev@yahoo.com}
\author[B. Goldsmith]{Brendan Goldsmith}
\address{Technological University, Dublin, Dublin 7, Ireland}
\email{brendan.goldsmith@TUDublin.ie; brendangoldsmith49@gmail.com}
\author[F. Karimi]{Fatemeh Karimi}
\address{Department of Mathematics, Payame Noor University, Tehran, Iran}
\email{fatemehkarimi@pnu.ac.ir}
	
\vskip1.0pc

\title[Hereditary and Super Properties in Directly Finite Abelian Groups]{On Some Hereditary and Super Classes Of \\ Directly Finite Abelian Groups}
\keywords{Abelian groups, (relatively) Hopfian groups, hereditarily (relatively, weakly) Hopfian groups, super (relatively, weakly) Hopfian groups, directly finite groups}
\subjclass[2010]{20K10, 20K20, 20K21, 20K30}
	
\begin{abstract} Continuing recent studies of both the hereditary and super properties of certain classes of Abelian groups, we explore in-depth what is the situation in the quite large class consisting of directly finite Abelian groups.

Trying to connect some of these classes, we specifically succeeded to prove the surprising criteria that a relatively Hopfian group is hereditarily only when it is extended Bassian, as well as that, a relatively Hopfian group is super only when it is extended Bassian. In this aspect, additional relevant necessary and sufficient conditions in a slightly more general context are also proved.
\end{abstract}

\maketitle

\medskip

\centerline{(Dedicated to the {\bf 70}th birthday of Patrick Warren Keef)}

\vskip1.0pc

\section{Introduction and Motivation}

The important notion of a Bassian group has its origins in works of Hyman Bass (see \cite{B1} and \cite{B2}), and was formally defined in \cite{CDG} as follows: the Abelian group $G$ is {\it Bassian} if $G$ cannot be embedded in a proper homomorphic image of itself. Part of the interest in such groups derives from the fact that they are Hopfian and that they are indeed much simpler than Hopfian groups in general; exactly, it is well known that the latter form a proper class in the set-theoretic sense: for any infinite cardinal $\kappa$, there exists a Hopfian group of cardinality $\geq \kappa$. On the other hand, Bassian groups are countable (cf. \cite{CDG}). Since the publication of \cite{CDG}, there has been considerable interest in other generalizations of the Bassian property (see, for example, \cite{CDG1}, \cite{CDGK}, \cite{DG}, \cite{DK}, \cite{K} and \cite{Ke} for a cross-section of these developments).

Another approach to simplifying the notion of Hopficity goes back to Hirshon's work on non-commutative groups (see \cite{H}): a group $G$ is said to be {\it super Hopfian} if every homomorphic image of $G$ is Hopfian. An obvious variation of this approach is to consider groups having the property that every subgroup is Hopfian; such groups are hereafter referred to as {\it hereditarily Hopfian} groups. If $\X$ is any class of groups, we respectively write $\X^s$ and $\X_h$ for the corresponding super and hereditarily $\X$ classes. In the case of Abelian groups, such super super and hereditarily Hopfian groups were classified in \cite{GG}. In our subsequent discussions all groups under consideration shall be additively written Abelian groups.

Considerable interest has also been shown to enlarging the class of Hopfian groups and in this context, a particularly interesting class is the class of {\it directly finite} groups, i.e., groups which do not have an isomorphic direct summand. This class is the \lq negation\rq \ of the class of $ID$ groups firstly studied by Beaumont and Pierce in \cite{BP}. In this paper, we focus attention on the super and hereditary properties in connection with the class of directly finite groups, denoted by $\DF$, and some of its subclasses.

In the course of our investigations, we found it necessary to introduce a further class of groups which turned out to be the key to unlocking the relationships between the various classes. We denote the class of Bassian groups by $\B$, and we define a group $G$ to be {\it Extended Bassian}, and write $G \in \EB$, if $G$ has the form $G = B \oplus D$, where $B$ is Bassian and $D$ is a torsion divisible group of the form $D = \bigoplus\limits_p\qc^{(n_p)}$ with each $n_p$ is finite (see Section \ref{hdfg}).

Our principal results, that are Theorem \ref{classher}, Theorem \ref{bxhcase} and Theorem \ref{xebscase}, sound thus:

\medskip

$\bullet$ if $\X$ is any subclass of Hopfian groups which contains the class $\B$ of Bassian groups, then a group $G$ is in $\X^s$ (resp., $G$ is in $\X_h$) if, and only if, $G$ is in $\B^s$ (resp., $G$ is in $\B$);

\medskip

$\bullet$ if $\X$ is any subclass of directly finite groups which contains the class $\EB$, then a group $G$ is in $\X^s$ (resp., $G$ is $\X_h$) if, and only if, $G$ is in $\EB$ (resp., $G$ is in $\EB$).

\medskip

In Section \ref{prelim}, we have taken the opportunity to present the original proof of an unpublished result of the late Tony Corner, which was useful in the current context, but may be of wider interest as well.

Finally, our notation is standard and may be found in the classical texts by Fuchs (cf. \cite{F1,F}).

\section{Classes of Directly Finite Groups}

Let $\mathcal{X}$ be a class of groups (we will always use class to mean that, if $G$ is a group belonging to the class, then any group isomorphic to $G$ also belongs to the class) and let $\mathcal{X}^s, \mathcal{X}_h$ denote the subclasses of $\mathcal{X}$ defined by

\medskip

(i) $\mathcal{X}^s = \{ G \in \mathcal{X}| \ {\rm every \ epimorphic \ image \ of} \ G \ {\rm is \ in} \ \mathcal{X} \}$

\medskip

(ii) $\mathcal{X}_h = \{ G \in \mathcal{X}| \ {\rm every \ subgroup \ of} \ G \ {\rm is \ in} \ \mathcal{X} \}$.

\medskip

Groups in $\X ^s$ are often called {\it super $\X$ groups}; those in $\X _h$ are referred to as {\it hereditarily $\X$ groups}.

Such subclasses have been investigated in various different contexts: super-Hopfian groups were investigated in \cite{GG}, torsion-free hereditarily separable groups were studied by various authors (see, for instance, \cite[Ch.VII. 4]{EM}), and more recently two of the current authors classified super-Bassian groups in \cite{DG}.

\medskip

First of all, note the following three simple properties of arbitrary classes.

\begin{proposition}\label{scprop} Let $\mathcal{X}, \mathcal{Y}$ be arbitrary classes of groups with associated classes $\mathcal{X}^s, \mathcal{X}_h, \mathcal{Y}^s, \mathcal{Y}_h$. The following three statements hold:
	
(i) if $\mathcal{X} \subseteq \mathcal{Y}$, then $\mathcal{X}^s \subseteq \mathcal{Y}^s$ and $\mathcal{X}_h \subseteq \mathcal{Y}_h$;
	
(ii) if $G \in \mathcal{X}^s$, then any homomorphic image of $G$ is also in $\mathcal{X}^s$ and any subgroup of $G$ is in $\mathcal{X}_h$;
	
(iii) $(\mathcal{X}^s)^s = \mathcal{X}^s$ and $(\mathcal{X}_h)_h = \mathcal{X}_h$.
\end{proposition}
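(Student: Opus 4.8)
The plan is to verify three short, self-contained claims, each following directly from the definitions of $\X^s$ and $\X_h$ given in (i) and (ii) of the preceding display. None of the three should require more than a few lines, since they are purely formal consequences of quantifying over images and subgroups, so the real task is to organize the set inclusions cleanly.

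For part (i), I would argue the two inclusions separately but identically in spirit. Suppose $G \in \X^s$; then $G \in \X \subseteq \Y$, and every epimorphic image of $G$ lies in $\X \subseteq \Y$, so by the definition of $\Y^s$ we get $G \in \Y^s$. The hereditary inclusion is verbatim the same with ``subgroup'' in place of ``epimorphic image'': if $G \in \X_h$ then $G \in \Y$ and every subgroup of $G$ lies in $\X \subseteq \Y$, whence $G \in \Y_h$. The only thing to be careful about is that membership in $\X^s$ presupposes membership in $\X$ (and hence in $\Y$), which the definition indeed encodes.

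For part (ii), the key observation is that epimorphic images and subgroups behave transitively, so the properties ``every image lies in $\X$'' and ``every subgroup lies in $\X$'' are inherited. Concretely, if $G \in \X^s$ and $H$ is a homomorphic image of $G$, then every epimorphic image of $H$ is again an epimorphic image of $G$ (compose the two surjections), hence lies in $\X$; moreover $H$ itself is an epimorphic image of $G$, so $H \in \X$, giving $H \in \X^s$. Dually, if $K \le G$ then every subgroup of $K$ is a subgroup of $G$, so lies in $\X$, and $K \in \X$ as well, yielding $K \in \X_h$. The two halves use nothing more than the transitivity of the relations ``is an epimorphic image of'' and ``is a subgroup of''.

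For part (iii), the inclusion $(\X^s)^s \subseteq \X^s$ is immediate because $(\X^s)^s \subseteq \X^s$ holds for any class by the definition of the super operation applied to $\X^s$; the reverse inclusion $\X^s \subseteq (\X^s)^s$ is exactly the statement, already established in part (ii), that every homomorphic image of a group in $\X^s$ lies in $\X^s$, which is precisely what is required for a group of $\X^s$ to belong to $(\X^s)^s$. The idempotence $(\X_h)_h = \X_h$ is proved symmetrically, using the subgroup half of (ii). I expect no genuine obstacle here; if anything, the only subtlety is keeping straight that each of the operators $(-)^s$ and $(-)_h$ is both deflationary (it returns a subclass) and idempotent, and that (ii) supplies the one non-trivial direction of (iii) for free.
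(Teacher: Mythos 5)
Your proposal is correct and follows essentially the same route as the paper: part (i) by unwinding the definitions, part (ii) via transitivity of epimorphic images and subgroups, and part (iii) by combining the deflationary nature of the operators with part (ii) for the nontrivial inclusion. The only cosmetic difference is that the paper derives $(\mathcal{X}^s)^s \subseteq \mathcal{X}^s$ by applying part (i) to the inclusion $\mathcal{X}^s \subseteq \mathcal{X}$, whereas you read it off directly from the definition; both are equally valid.
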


\begin{proof} If $G \in \mathcal{X}^s$ and $X$ ia any epimorphic image of $G$, then $X \in \mathcal{X} \subseteq \mathcal{Y}$, so $G \in \mathcal{Y}^s$ giving (i). Part (ii) is immediate since compositions of epimorphisms are again epimorphisms. To establish part (iii), it suffices by part (i) to show that $\mathcal{X}^s \subseteq (\mathcal{X}^s)^s$, but this follows at once from part (ii).
	
The proofs for the hereditary properties are quite similar, and thus are left to the reader.	
\end{proof}

As mentioned in the introductory section, we concentrate on the class $\mathcal{DF}$ of directly finite groups. We will also be interested in the classes $\mathcal{B, H, RH, WH}$ which are all {\bf genuinely} subclasses of $\mathcal{DF}$ and have been intensively studied recently in \cite{CDG} and \cite{CDGK}.

\medskip

For the sake of completeness of the exposition, recall once again that these classes are defined as follows: a group $G$ is said to be Bassian ($G \in \B$) if $G$ cannot be embedded in a proper homomorphic image of itself, while $G$ is said to be Hopfian ($G \in \mathcal{H}$) if $G$ does not have a proper isomorphic quotient.

Our next two concepts are generalizations of Hopficity: a group $G$ is called {\it relatively Hopfian} ($G \in \RH$) if $G$ is not isomorphic to a proper direct summand of any of its proper quotients, while $G$ is called {\it weakly Hopfian} ($G \in \WH$) if $G$ is not isomorphic to a proper quotient $G/H$, where $H$ is a proper pure subgroup of $G$.

It was shown in \cite{CDGK} that $\mathcal{H} \subseteq \RH \subseteq \WH \subseteq \DF$ and also that each containment is strict. Likewise, it follows easily from the definition of a Bassian group stated above that $\mathcal{B}$ is strictly contained in $\mathcal{H}$.

More precisely, we wish to study in what follows the associated subclasses $\X ^s, \X _h$, where $\X$ is a class with $\B \subseteq \X \subseteq \DF$.

\section{Preliminaries}\label{prelim}

We begin this section with some results that will be helpful in our subsequent discussions. The first one, that is the Corner's Extension of Szele's Theorem - this is reference [U14] in \cite{G} -  has been used previously by one of the authors, but the full proof given by Corner has never been published heretofore; we correct this below. Recall that Szele's Theorem states that a basic subgroup of a $p$-group $A$ is an endomorphic image of $A$ - see, e.g., \cite[Theorem 36.1]{F1} or \cite[Theorem 5.6.10]{F}.

\begin{theorem}\label{corner} (Corner's Extension of Szele's Theorem)
Let $G$ be an extension of a torsion group $T$ by a countable torsion-free group, and let $B_p$ be a $p$-basic subgroup of the $p$-component of $T$. Then, $B_p$ is an endomorphic image of $G$.
\end{theorem}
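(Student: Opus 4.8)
The plan is to peel off the non-$p$ torsion, reduce to producing an epimorphism onto $B_p$, and then to build that epimorphism by extending a Szele map across the countable torsion-free quotient, the extension being controlled layer-by-layer by the vanishing of $\mathrm{Ext}$ against bounded groups.

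\textbf{Reductions.} Put $F=G/T$ and split $T=T_p\oplus T_{p'}$ with $T_{p'}=\bigoplus_{q\ne p}T_q$. The quotient $\bar G=G/T_{p'}$ is again an extension of the $p$-group $T_p$ by $F$, and now $T_p=t\bar G$. Since $B_p\cap T_{p'}=0$, the canonical map $G\twoheadrightarrow\bar G$ carries $B_p$ isomorphically onto a subgroup $\bar B\le\bar G$, and an endomorphism of $\bar G$ with image $\bar B$ lifts (via this isomorphism and the surjection $G\twoheadrightarrow\bar G$) to an endomorphism of $G$ with image $B_p$. Hence I may assume $A:=T=T_p=tG$ and that $F=G/A$ is countable torsion-free, and it is enough to produce an epimorphism $\theta:G\twoheadrightarrow B$, where $B:=B_p$: composing $\theta$ with the inclusions $B\hookrightarrow A\hookrightarrow G$ then yields the required endomorphism.

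\textbf{Homological input.} I would first isolate the fact that $\mathrm{Ext}(F,C)=0$ whenever $C$ is bounded: in any extension $0\to C\to E\to F\to0$ the subgroup $C$ equals $tE$ (because $E/C\cong F$ is torsion-free), so $C$ is pure and bounded in $E$ and is therefore a direct summand, i.e.\ the extension splits. By Szele's Theorem there is an epimorphism $s:A\twoheadrightarrow B$; write $B=\bigoplus_{n\ge1}B_n$ with $B_n$ a direct sum of cyclic groups of order $p^n$, set $B_{(n)}=\bigoplus_{k\le n}B_k$ with projection $\pi_{(n)}:B\to B_{(n)}$, and write $s=(s_n)_n$ with $s_n:A\to B_n$. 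Crucially, Szele's map is locally finite on $A$: for each $a\in A$ one has $s_n(a)=0$ for almost all $n$. Because each $B_{(n)}$ is bounded, the truncation $s^{(n)}=\pi_{(n)}\circ s:A\to B_{(n)}$ extends to a homomorphism $G\to B_{(n)}$; equivalently, letting $\xi\in\mathrm{Ext}(F,A)$ denote the class of our extension, the obstruction $(s^{(n)})_*\xi\in\mathrm{Ext}(F,B_{(n)})$ vanishes for every $n$.

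\textbf{Assembling the map.} The aim is to upgrade these compatible finite-level extensions to a single homomorphism $G\to B$ with image all of $B$. Fixing lifts $g_1,g_2,\dots\in G$ of a generating set of the countable group $F$, I would construct homomorphisms $\mu_n:G\to B_n$ extending $s_n$ recursively, exploiting at each layer the freedom measured by $\Hom(F,B_n)$ (two extensions of $s_n$ differ by a map killing $A$, hence factoring through $F$) to arrange that each fixed $g_j$ satisfies $\mu_n(g_j)=0$ for all large $n$. If this can be secured, then $\theta=(\mu_n)_n$ is a well-defined map into $\bigoplus_nB_n=B$ (local finiteness holding on $A$ by Szele and on each $g_j$ by construction, hence on all of $G$), and it restricts to an epimorphism onto $B$.

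\textbf{Main obstacle.} The genuine difficulty — and the point where countability of $F$ is indispensable — is that the two demands interact: a relation $\sum_jc_jg_j=a\in A$ forces $\sum_jc_j\mu_n(g_j)=s_n(a)$, so killing the values $\mu_n(g_j)$ requires divisibility data on the right-hand side to be met inside $B$, and one cannot in general keep the restriction to $A$ equal to a prescribed Szele map (indeed $\mathrm{Ext}(F,B)\ne0$ already for $F=\mathbb Q$). The resolution I would pursue is to build $\theta$ and its restriction to $A$ simultaneously, using the divisibility of $A/B$ (which comes from basicness) to express the obstructing elements as $p$-divisible modulo $B$, and the unboundedness of $B=\bigoplus_nB_n$ to realize the required large $p$-heights in sufficiently high layers $B_n$; countability guarantees that only countably many such coherence conditions arise, each of finite support and hence dischargeable at a finite layer. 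Verifying that this bookkeeping can be carried through while preserving surjectivity onto $B$ is, I expect, the crux of the argument.
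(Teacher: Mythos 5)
Your opening reduction (splitting off $T_{p'}$ and reducing to the construction of an epimorphism from a group $G$ with $tG=A$ a $p$-group onto a basic subgroup $B$ of $A$) matches the paper's final paragraph, and your homological observation that $\mathrm{Ext}(F,C)=0$ for bounded $C$ is correct. But the argument has a genuine gap exactly where you say it does: the assembling step is not carried out, and the mechanism you propose for it cannot work as stated. When $F=\mathbb{Q}$ (the paper in fact first enlarges $G$ so that $G/T$ is divisible, so this is the essential case), each $B_n$ is bounded and reduced, hence $\Hom(F,B_n)=0$: each truncated Szele component $s_n$ has a \emph{unique} extension $\mu_n:G\to B_n$, and there is no freedom whatsoever to ``arrange that each fixed $g_j$ satisfies $\mu_n(g_j)=0$ for all large $n$.'' The assembled tuple $(\mu_n)_n$ then lands only in the product $\prod_n B_n$, not in $B=\bigoplus_n B_n$, and your recursion has no parameters left to repair this. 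You correctly sense that one must abandon the idea of extending a prescribed Szele map and build $\theta$ together with its restriction to $A$, but you leave that step --- which is the entire content of the theorem --- as an unverified ``crux.''

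The paper's proof supplies precisely the two ideas your sketch is missing. First, it chooses the decomposition $B=\bigoplus_n B_n$ canonically, with $B_n$ a maximal $p^n$-bounded summand of the successive complements in $G=B_1\oplus\cdots\oplus B_{n-1}\oplus G'_{n-1}$; this equips every $g\in G$ with coordinates $g_n\in B_n$, and the divisibility of $G/B$ forces the heights $H(g_n)$ to tend to infinity for each fixed $g$. Second, the countability of $G/T$ is used to diagonalize: one extracts a single sequence $s_n\to\infty$ with $H(g_n)-s_n\to\infty$ simultaneously for all $g$, and then composes the coordinate map with a surjection $\omega:B\twoheadrightarrow B$ that is the identity on an initial segment and collapses blocks $\sum_{n\in E_k}B_n$ of very high index onto $B_k$ (possible by a final-rank count). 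The height estimate guarantees $\omega(g_n)=0$ for almost all $n$, so the sum converges in $B$, and surjectivity of $\omega$ on $B$ gives surjectivity of the composite onto $B$. Without these two devices --- or a worked-out substitute for the bookkeeping you defer --- your proposal is an accurate description of the difficulty rather than a proof of the theorem.
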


\begin{proof} Consider firstly the situation in which $T$ is a $p$-group with basic subgroup $B$ and $G$ is an extension of $T$ by a countable torsion-free group. Embed $G$ in a group $G'$ such that $G'/T$ is a minimal divisible group containing $G/T$: if there exists a homomorphism $\omega: G' \to B$ such that $\omega(B)= B$, then we certainly have $\omega(G) = B$, and the theorem is proved. This means it will be enough to prove, under the additional hypothesis that $G/T$ be divisible, that there exists a map $\omega: G \to B$ such that $\omega(B) = B$.
	
Consider, then, an extension $G$ of the $p$-group $T$ such that $G/T$ is divisible, countable and torsion-free. Put $G_0 = G$, and as soon as $G'_{n-1}$ is defined, write $G'_{n-1} = B_n \oplus G'_n$, where $B_n$ is a maximal $p^n$-bounded direct summand of $G'_{n-1}$. Thus, by an obvious induction, for each $n$,
$$ G = B_1 \oplus B_2 \oplus \dots \oplus B_n + G'_n ;$$
and $B_1 \oplus B_2 \oplus \dots \oplus B_n$ is a maximal $p^n$-bounded direct summand of $T$, so $B = B_1 \oplus B_2 \oplus \dots $ is a basic subgroup of $T$. Now, $T/B$ is divisible, by the definition of a basic subgroup, and $G/T$ is divisible, by hypothesis; so, the exact sequence $0 \to T/B \to G/B \to G/T \to 0$ shows that $G/B$ is also divisible.	
	
If $g$ is an element of $G$, put $g'_0 = g$, and as soon as $g'_{n-1}$ is defined, write $g'_{n-1} = g_n + g'_n$, where $g_n \in B_n$ and $g'_n \in G'_n$; then, for each $n$, we have
$$g = g_1 + g_2 + \dots + g_n + g'_n.$$
In this way, $g$ determines a sequence $\{g_1, g_2 \dots \}$, which we shall further denote by $\pi g$ and construe as an element of the complete direct sum $P = \Sigma^*_n B_n$; clearly, $\pi$ is a homomorphism $G \to P$ which \lq\lq leaves $B$ invariant\rq\rq.
	
Suppose for the moment that we have a sequence of integers $s_n$ with the properties:
	
(i) $s_n \to \infty$,
	
(ii) $H(g_n) - s_n \to \infty$ for each $g \in G$.
	
Then, let $m$ be a positive integer such that $r(p^m B) = finrk(B)$, and take $E_{m+1}, E_{m+2}, \dots $ to be pairwise disjoint subsets of the set of integers $> m$ such that
	
(a) $r(\sum \limits_{n \in E_k} B_n) \geq r(B_k)$,
	
(b) $s_n \geq k \quad (n \in E_k).$
	
\medskip
	
Therefore, $(i)$ and the choice of $m$ ensure the existence of suitable $E_k$. Condition (a) allows us to define a homomorphism $\omega$ of $B$ onto itself which reduces to the identity on $B_1 \oplus B_2 \oplus \dots \oplus B_n$, maps $\sum\limits_{n \in E_k} B_n$ onto $B_k \ (k > m)$, and vanishes on $B_n$ if $n > m, n \notin \bigcup_k E_k$. Now, if $g \in B$, then all but a finite number of the $g_n$ vanish, and $g = \sum_ng_n$, so $\omega(g) = \sum_n\omega(g_n)$. We extend $\omega$ to the whole of $G$ by using the same formula to define $\omega(g)$ for general $g \in G$. To justify this definition, we must show that, for an arbitrary $g \in G$, all but a finite number of the $\omega(g_n)$ vanish, for then the sum $\sum_n \omega(g_n)$ has a meaning in $B$. But, if $g \in G$, then by (ii) there is an integer $N$ such that the height $H(g_n) \geq s_n$ whenever $n > N$, and we may suppose that $N \geq m$; if $n > N, n \notin \bigcup_k E_k$, then $\omega$ vanishes on $B_n$, so certainly $\omega(g_n) = 0$; and, if $n>N, n \in E_k$, then $H(\omega(g_n))\geq H(g_n) \geq s_n \geq k$, and since $\omega(g_n) \in B_k$, which is a pure subgroup of $B$ containing no non-zero element of height $\geq k$, it must be that $\omega(g_n) = 0$.
	
It remains for us to construct a sequence of integers $s_n$ satisfying points (i) and (ii). We remark first that, if $g \in G$, then the sequence of heights $H(g_n)$ tends to infinity. For, if $k$ is an arbitrary positive integer, then since $G/B$ is divisible, we may write $g = b + p^ng' \ (b \in B, g' \in G)$; but $b \in B_1 \oplus \dots \oplus B_n$ for some integer $N$, and for $n \geq N$ we infer $g_n = p^kg'_n$, so $H(g_n) \geq k$, which proves the assertion. Now, $G/T$ is countable, by hypothesis, so we denote by $g^{(1)}, g^{(2)}, \dots $ a complete set of representatives of the co-sets of $T$ in $G$. Put $N_0 = \{0\}$, and for $k = 1,2, \dots $ let $N_k$ be the least integer greater than $2k + 1$ such that $H(g_n^{(i)}) \geq 2k$ for $i = 1, \dots , k$ and all $n \geq N_k$. Define $s_n = k$ for $N_{k-1} < n \leq N_k$. Now, it is evident that $N_k \to \infty$ monotonically, so $s_n$ is defined for every positive integer $n$; and, moreover, $s_n \geq k$ for $n > N_{k-1}$, so (i) holds. If $N_{k-1} < n \leq N_k$, then $n \geq 1 + N_{k-1} \geq 2k = 2s_n$; so $n \geq 2s_n$ for all $n$. 	
	
To verify (ii), we note that, if $g \in G$, then $g = t + g^{(i)}$ for some $t \in T$ and $i = 1, 2, \dots$; but $t$ is a torsion element, so $p^mt = 0$ for some $m$ and, therefore, $H(t_n) \geq n - m \geq 2s_n - m$ for $n = 1, 2, \dots $; and, if $n > N_i$, then $N_k < n \leq N_{k+1}$ for some $k \geq i$, so $H(g_n^{(i)}) \geq 2k = 2s_n - 2$; consequently, for $n > N_i$, we have $H(g_n) \geq \min(2s_n - m, 2s_n -2)$, so $H(g_n) - s_n \geq s_n - \max(m, 2)$, and this tends to infinity, by (i). This completes the proof in the case where $T$ is a $p$-group.
	
If $T$ is now an arbitrary torsion group and $B_p$ is a basic subgroup of the $p$-component $T'$ of $T$, write $T = T' \oplus T''$. Then, the torsion subgroup of the quotient $G/T''$ is obviously $T/T''$, which is isomorphic with $T'$, so its basic subgroup is isomorphic with $B_p$. Furthermore, $(G/T'')/(T/T'') \cong G/T$ is countable, so by the first part of the argument, $B_p$ is a homomorphic image of $G/T''$. Hence, $B_p$ is an endomorph of $G$, as claimed.
\end{proof}

\begin{remark} Although we shall make no use of it here, Corner also pointed out a further simple corollary: {\it let $B$ be a basic subgroup of a torsion group $T$ whose reduced part has only a finite number of non-vanishing primary components, and let $G$ be an extension of $T$ by a countable torsion-free group. Then, $B$ is an endomorph of $G$}.
\end{remark}

The next result is, probably, well-known, but we include its proof for the sake of completeness. To avoid possible confusion with the usual $p$-rank, $r_p(G) = \dim G[p]$, we just write $basrank(A) = dim(A/pA)$, where $A/pA$ is considered as a vector space over the field of $p$ elements; our terminology is motivated by the fact that, for a $p$-group $G$, $basrank(G)$ is equal to $r_p(B)$ where $B$ is a basic subgroup of $G$. Note, however, that $basrank(G)$ may be finite even when $G$ is uncountable; for example, if $J_p$ is the usual group of $p$-adic integers, then $basrank(J_p) = 1$.

\begin{proposition}\label{useful}
Let $G$ be a torsion-free group of finite rank $n$ and let $X$ be an arbitrary epimorphic image of $G$. Then, $X$ is isomorphic to a subgroup of $\Q^{(n)} \oplus D$, where $D$ is a torsion divisible group, $D = \bigoplus\limits_{p \in \mathbb{P}} D_p$, where each $D_p$ is a direct sum of $\kappa_p$ copies of $\qc$ with $\kappa_p \leq n$ for all $p$. Equivalently, $X$ is a subgroup of $(\Q \oplus \Q/\Z)^{(n)}$.
\end{proposition}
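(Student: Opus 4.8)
The plan is to realize $X$ inside its divisible hull and then control separately the two kinds of invariant that a divisible group carries. Write $X = G/K$ for a subgroup $K \le G$, and let $E$ denote the divisible (equivalently, injective) hull of $X$. Since $E$ is divisible it splits as
$$E \cong \Q^{(r)} \oplus \bigoplus_p \qc^{(d_p)},$$
where $r$ is the torsion-free rank of $X$ and the $d_p$ are the multiplicities of the Pr\"ufer summands. As $X \hookrightarrow E$, the whole proposition reduces to the two inequalities $r \le n$ and $d_p \le n$ for every prime $p$; once these are established, $X$ embeds in $\Q^{(n)} \oplus \bigoplus_p \qc^{(d_p)}$ with all $d_p \le n$, which is exactly the asserted shape of $D$.

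For the torsion-free rank I would use that torsion-free rank is additive along the short exact sequence $0 \to K \to G \to X \to 0$: since $r_0(G) = n$ this gives $r = r_0(X) = n - r_0(K) \le n$. To pin down the Pr\"ufer multiplicities I would exploit that a divisible hull is an essential extension, so $X$ is essential in $E$. Any element of order $p$ in $E$ generates a simple subgroup, which must already meet $X$ nontrivially and hence lie in $X$; therefore $E[p] = X[p]$, and consequently $d_p = \dim_{\mathbb{F}_p} E[p] = \dim_{\mathbb{F}_p} X[p] = r_p(X)$. The remaining task is thus the single bound $r_p(X) \le n$.

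The crucial part is this last bound, and I expect it to rest on an elementary lemma about finite-rank torsion-free groups: if $A$ is torsion-free of rank $m$, then $\dim_{\mathbb{F}_p}(A/pA) \le m$. Indeed, any $m+1$ elements of $A$ are $\Q$-linearly dependent in $A \otimes \Q$, hence, by torsion-freeness, satisfy a genuine integral relation $\sum c_i a_i = 0$ whose coefficients may be taken with $\gcd$ equal to $1$; reducing modulo $pA$ keeps this relation nontrivial because some $c_i$ is prime to $p$, so the images cannot be independent over $\mathbb{F}_p$. To apply it, set $L = \{ g \in G : pg \in K \}$, so that $X[p] = L/K$ with $pL \subseteq K \subseteq L$; thus $L/K$ is a homomorphic image of $L/pL$. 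Since $L \le G$ is torsion-free of rank at most $n$, the lemma yields $\dim_{\mathbb{F}_p}(L/K) \le \dim_{\mathbb{F}_p}(L/pL) \le n$, that is, $r_p(X) \le n$.

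Combining these pieces gives an embedding $X \hookrightarrow \Q^{(n)} \oplus \bigoplus_p \qc^{(\kappa_p)}$ with $\kappa_p \le n$, which is the first assertion. Finally, for the \lq\lq equivalently\rq\rq\ clause I would simply note that $\bigoplus_p \qc \cong \Q/\Z$, whence $(\Q \oplus \Q/\Z)^{(n)} \cong \Q^{(n)} \oplus \bigoplus_p \qc^{(n)}$ contains every group of the displayed form, while conversely this target is itself of that form (with each $\kappa_p = n$); so being a subgroup of $\Q^{(n)} \oplus D$ and being a subgroup of $(\Q \oplus \Q/\Z)^{(n)}$ amount to the same condition.
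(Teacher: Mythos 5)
Your proof is correct, and it reaches the crucial bound $\kappa_p \le n$ by a genuinely different route from the paper. The paper extends $\phi$ along the inclusion $G \le \Q^{(n)}$ to a surjection $\eta : \Q^{(n)} \twoheadrightarrow d(X)$, so that the torsion divisible part $D$ of the hull sits in an exact sequence $0 \to K \to \Q^{(n)} \to D \to 0$, and then quotes Arnold's rank formula $\dim(D[p]) + basrank(\Q^{(n)}) = basrank(K) + basrank(D)$ together with $basrank(K) \le r_0(K) \le n$. You instead stay inside $G$ itself: you identify the Pr\"ufer multiplicities of the divisible hull with $r_p(X)$ via essentiality, pull $X[p]$ back to the subgroup $L = \{g \in G : pg \in K\}$ with $pL \subseteq K \subseteq L$, and bound $\dim_{\mathbb{F}_p}(L/K) \le \dim_{\mathbb{F}_p}(L/pL) \le r_0(L) \le n$, proving the inequality $\dim_{\mathbb{F}_p}(A/pA) \le r_0(A)$ from scratch by the gcd-one dependence relation. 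Both arguments ultimately rest on the same fact --- that $basrank$ is at most the torsion-free rank for finite-rank torsion-free groups, which is exactly the cited Theorem 0.1 of Arnold --- but yours is self-contained and avoids the exact-sequence rank formula (Arnold's Theorem 0.2), at the cost of a slightly longer hands-on computation; the paper's version is shorter given the references. Your treatment of the torsion-free rank and of the final \lq\lq equivalently\rq\rq\ clause matches the intended reading.
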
	

\begin{proof}
Let $\phi: G \twoheadrightarrow X$ be an epimorphism. Since $r_0(G) = n$, we obtain that $G \leq \Q^{(n)}$. Let $d(X)$ be a minimal divisible hull of $X$. Thus, the mapping $\phi$ extends to a homomorphism $\eta$ from $\Q^{(n)}$ into $d(X)$. But then
$$d(X) \geq \eta(\Q^{(n)}) \geq \eta(G) = \phi (G) = X,$$ and since the image of $\Q^{(n)}$ under $\eta$ is divisible, the minimality of $d(X)$ gives $d(X) = \eta(\Q^{(n)})$, i.e., $d(X)$ is an epimorphic image of $\Q^{(n)}$. So, $d(X)$ has the form $d(X) = D \oplus D_1$, where $D_1$ is torsion-free divisible and $D$ is torsion divisible. Apparently, $r_0(D_1) \leq n$ since $D_1$ is an epic image of $\Q^{(n)}$.
	
Now, $D$ can be expressed as $D = \bigoplus\limits_{p \in \mathbb{P}}D_p$, where each $D_p$ has the form $D_p = \bigoplus\limits_{\kappa_p}\qc$. Moreover, $D$ is an epic image of $\Q^{(n)}$, so we have an exact sequence $0 \to K \to \Q^{(n)} \to D \to 0$. With the aid of a standard result - see, e.g., \cite[Theorem 0.2]{A} - we get, for each $p$,
	
$$\dim(D[p]) + basrank(\Q^{(n)}) = basrank(K) + basrank(D).$$
	
Furthermore, we deduce that $\dim(D[p]) = \dim(D_p[p]) = \kappa_p$, while each of the divisible terms $D$ and $\Q^{(n)}$ clearly has zero $basrank$. It follows that $\kappa_p$ equals the $basrank$ of $K$. Since $K$ is torsion-free of finite rank, its $basrank$ is at most $r_0(K)$ - see \cite[Theorem 0.1]{A} - which, in turn, is bounded by $n$, as required.
\end{proof}

The next technicality is crucial.

\begin{lemma}\label{epicfin} Let $\alpha: M \twoheadrightarrow Y$ be an epimorphism from a reduced Bassian group $M$ to a $p$-group $Y$. Then $r_p(Y)$ is finite.
\end{lemma}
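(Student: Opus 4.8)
The plan is to argue by contradiction: assuming $r_p(Y)$ infinite, I will manufacture an epimorphism of $M$ onto either $\bigoplus_\omega\mathbb{Z}(p)$ or $\qc^{(\omega)}$, and then show that a reduced Bassian group can admit neither. Two structural facts about a reduced Bassian group $M$ drive everything and I would record them first: (a) $M$ has finite torsion-free rank, say $r_0(M)=r$ (a standard property of Bassian groups, see \cite{CDG}); and (b) for each prime $p$ the primary component $T_p:=t(M)_p$ is finite. Granting (a) and (b), one has $\dim_{\mathbb{F}_p}(M/pM)\le r+r_p(T_p)<\infty$, because the $q$-components with $q\neq p$ vanish modulo $p$ and a torsion-free group of rank $r$ satisfies $\dim(A/pA)\le r$.

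For the reduction, decompose $Y=D\oplus R$ with $D=\qc^{(\lambda)}$ divisible and $R$ reduced, so that $r_p(Y)=\lambda+r_p(R)$ and the hypothesis $r_p(Y)=\infty$ forces $\lambda=\infty$ or $r_p(R)=\infty$. If $r_p(R)=\infty$, then $R$ is a reduced $p$-group with infinite socle, so $R/pR$ is infinite (a reduced $p$-group with finite $R/pR$ has a finite basic subgroup and is therefore finite); composing $\alpha$ with $R\twoheadrightarrow R/pR$ yields $M\twoheadrightarrow\bigoplus_\omega\mathbb{Z}(p)$, whence $\dim_{\mathbb{F}_p}(M/pM)=\infty$, contradicting the bound above. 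If instead $\lambda=\infty$, then projecting gives an epimorphism $\beta\colon M\twoheadrightarrow\qc^{(\omega)}$. The image $\beta(t(M))$ lies in a $p$-group, so $\beta(t(M))=\beta(T_p)$ is finite by (b), and the induced map exhibits $\qc^{(\omega)}/\beta(T_p)$ — which still has infinite $p$-rank — as an epic image of the torsion-free group $M/t(M)$ of rank $r$. By Proposition \ref{useful} every epic $p$-image of $M/t(M)$ embeds in $\qc^{(r)}$ and hence has $p$-rank at most $r$, a contradiction.

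The heart of the matter is therefore fact (b), and this is exactly where I would bring in Corner's Extension of Szele's Theorem (Theorem \ref{corner}). Since a Bassian group is countable, $M$ is an extension of the torsion group $t(M)$ by the countable torsion-free group $M/t(M)$, so the $p$-basic subgroup $B_p$ of $T_p$ is an endomorphic image of $M$; in particular $B_p$ is at once a subgroup and an epimorphic image of $M$. Suppose $r_p(T_p)=\infty$, so $B_p=\bigoplus_i\mathbb{Z}(p^{n_i})$ has infinitely many cyclic summands. If the exponents $n_i$ are bounded, then $T_p=B_p$ is a bounded, hence pure, hence direct summand of $M$; writing $M=\mathbb{Z}(p^k)^{(\omega)}\oplus M'$ and using $\mathbb{Z}(p^k)^{(\omega)}\cong\mathbb{Z}(p^k)^{(\omega)}\oplus\mathbb{Z}(p^k)$ gives $M\cong M\oplus\mathbb{Z}(p^k)$, so $M$ is not Hopfian — impossible for a Bassian group. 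If the $n_i$ are unbounded, $B_p$ is still pure in $M$, and the usual shift $\mathbb{Z}(p^{n_i})\hookrightarrow\mathbb{Z}(p^{n_{i+1}})$ embeds $B_p$ into a proper quotient of itself; I would use this to embed $M$ into a proper quotient $M/\langle c\rangle$, where $c$ is a socle generator of $B_p$, contradicting the Bassian property.

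The main obstacle is precisely this last step in the unbounded case: promoting the self-embedding of $B_p$ to a monomorphism of the \emph{whole} of $M$ into a proper quotient, since an unbounded $B_p$ need not split off $M$. I expect to overcome it by combining the Corner endomorphism $\theta\colon M\twoheadrightarrow B_p$ with the shift so as to produce a monomorphism $M\hookrightarrow M$ whose image meets $\langle c\rangle$ trivially, which then descends to the required embedding $M\hookrightarrow M/\langle c\rangle$. The bounded subcase and the two global contradictions are routine; fact (a) I would simply quote from \cite{CDG}.
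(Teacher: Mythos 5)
Your overall architecture is sound and, modulo one point, runs parallel to the paper's own argument: both proofs ultimately rest on (a) finite torsion-free rank, (b) finiteness of the $p$-component $T_p$ of a reduced Bassian group, and Proposition \ref{useful} to bound the $p$-rank of the image coming from the torsion-free part. (The paper does this slightly more directly: since $\alpha$ kills the $q$-components for $q\neq p$, it splits $M/M_{p'}$ as $F\oplus L$ with $F$ finite and $L\cong M/T$ torsion-free of finite rank, and applies Proposition \ref{useful} to $\alpha(L)$; your decomposition of $Y$ into divisible plus reduced and the two-case analysis is a workable, if more roundabout, substitute.)

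The genuine gap is in your treatment of fact (b). The paper simply reads (b) off the classification of Bassian groups in \cite{CDG}: a Bassian group has finite $p$-rank for every $p$, and a reduced $p$-group with finite socle is finite. You instead set out to re-prove (b) from scratch via Corner's theorem, and your argument is incomplete exactly where you say it is: in the unbounded case you need a monomorphism $M\hookrightarrow M/\langle c\rangle$, and the proposed device --- composing the Corner epimorphism $\theta\colon M\twoheadrightarrow B_p$ with a shift of $B_p$ --- cannot by itself produce a monomorphism of all of $M$, since $\theta$ has an enormous kernel (it annihilates $M_{p'}$ and much more); one would have to perturb the identity of $M$ by such a map and control the intersection of the image with $\langle c\rangle$, which is essentially the nontrivial construction carried out in \cite{CDG} itself. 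As written, this step would fail. The repair is immediate, however: delete the entire third paragraph and quote (b) from the Main Theorem of \cite{CDG}, as the paper does; with that substitution your proof is correct.
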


\begin{proof} Let the torsion subgroup of $M$ be expressed as $T = M_p \oplus \bigoplus\limits_{q \neq p} M_q = M_p \oplus M_{p'}$, where the various groups $M_q$ are the $q$-primary components of $M$. Notice that, for each $M_q$, $r_q(M_q)$ is finite rank since $M$ is, by hypothesis, Bassian and since $M$ is also reduced, then each primary component is actually finite. Since $\alpha(M_{p'})$ is necessarily zero, we get an induced epimorphism $\bar{\alpha}: M/M_{p'} \twoheadrightarrow Y$. Note also that the sequence
$$0 \to T/M_{p'} \to M/M_{p'} \to M/T \to 0$$ is pure-exact, and since the first term is finite, being isomorphic to $M_p$, we get a decomposition $M/M_{p'} = F \oplus L$, where $F \cong T/M_{p'}$ is finite and $L \cong M/T$ is torsion-free of finite rank, as $M$ is Bassian.
	
So, $Y = \bar{\alpha}(M/M_{p'}) = \bar{\alpha}(F) + \bar{\alpha}(L)$; now, the image of $F$ is finite and the image of $L$ is a homomorphic image of a torsion-free group of finite rank. Thus, thanks to Proposition \ref{useful} above, $\bar{\alpha}(L)$ is isomorphic to a subgroup a group $D_p$ of the form $D_p = \bigoplus\limits_{\kappa_p}\qc$ with $\kappa_p \leq r_0(L)$. Hence, $r_p(Y)$ is finite, as asserted.
\end{proof}

Our final preliminary statement is the following.

\begin{proposition}\label{inftf}
If $A$ is a torsion-free group, then $A$ has a homomorphic image of the form $\bigoplus\limits_{\aleph_0}\qc$ (for any prime $p$) if, and only if, $A$ is of infinite rank.
\end{proposition}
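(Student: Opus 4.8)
The plan is to prove the two implications separately, dispatching necessity through the finite-rank machinery already in place and sufficiency through a divisibility (injectivity) argument; throughout I fix the prime $p$, since the asserted equivalence holds for each prime on its own.

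For the forward (necessity) implication I would argue by contraposition: assume $A$ is torsion-free of \emph{finite} rank $n$ and show that $\bigoplus_{\aleph_0}\qc$ cannot occur as an epimorphic image. By Proposition \ref{useful}, every epimorphic image $X$ of $A$ is isomorphic to a subgroup of a group $\Q^{(n)} \oplus D$, where $D = \bigoplus_{q \in \P} D_q$ with each $D_q = \bigoplus_{\kappa_q}\qc$ and $\kappa_q \leq n$. I would then compute the $p$-socle: the torsion-free summand $\Q^{(n)}$ contributes nothing, and the summands $D_q$ with $q \neq p$ contribute nothing, so $r_p(X) \leq r_p(\Q^{(n)} \oplus D) = \kappa_p \leq n$. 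Since $r_p\!\big(\bigoplus_{\aleph_0}\qc\big) = \aleph_0 > n$, no epimorphic image of a finite-rank $A$ can be isomorphic to $\bigoplus_{\aleph_0}\qc$. Hence any torsion-free group admitting such an image must have infinite rank.

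For the reverse (sufficiency) implication, suppose $A$ is torsion-free of infinite rank. First I would extract from a maximal independent subset a countably infinite independent subset of $A$, which generates a free subgroup $F \cong \bigoplus_{\aleph_0}\Z$ of $A$. Since $\bigoplus_{\aleph_0}\qc$ is a countable group and $F$ is free of countable rank, there is an epimorphism $\psi \colon F \twoheadrightarrow \bigoplus_{\aleph_0}\qc$. Now $\bigoplus_{\aleph_0}\qc$ is divisible, hence injective as a $\Z$-module, so $\psi$ extends to a homomorphism $\bar\psi \colon A \to \bigoplus_{\aleph_0}\qc$. Because $\bar\psi$ restricts to $\psi$ on $F$, its image contains $\psi(F) = \bigoplus_{\aleph_0}\qc$, so $\bar\psi$ is the desired epimorphism.

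The only point that genuinely requires care is the surjectivity of the extension in the sufficiency direction: injectivity of the divisible target guarantees merely that \emph{some} extension $\bar\psi$ exists, so I must record explicitly that any such extension is automatically onto, since it already surjects on the subgroup $F$. Everything else is routine bookkeeping — the production of $F$ from infinite rank, the existence of $\psi$ from the freeness of $F$ together with the countability of the target, and the $p$-socle estimate in the necessity direction.
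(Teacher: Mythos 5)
Your proposal is correct and follows essentially the same route as the paper: necessity via Proposition \ref{useful} (you merely spell out the $p$-socle computation that the paper leaves implicit), and sufficiency by mapping a free subgroup of countable rank onto $\bigoplus_{\aleph_0}\qc$ and extending by injectivity of the divisible target, noting that the extension remains surjective. The only cosmetic difference is that the paper first reduces to the case of a reduced group, a step your argument shows to be unnecessary.
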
 	

\begin{proof} If $A$ has a divisible direct summand of infinite rank, then the result is immediate. Clearly, then it suffices to establish the result under the additional hypothesis that $A$ is reduced. Set $D := \bigoplus\limits_{\aleph_0}\qc$.
	
Assume that $A$ is of infinite rank, so it has a free subgroup $F$ of countably infinite rank and as $D$ is countably generated, it follows from the standard property of freeness that there is an epimorphism $\phi: F \twoheadrightarrow D$. Since $D$ is injective, $\phi$ lifts to a homomorphism $\eta: A \to D$. Hence, $D = \phi(F) = \eta(F) \leq \eta(A) \leq D$, so that $D$ is an epic image of $A$, as expected.
	
Conversely, if $A$ has finite rank, it follows from Proposition~\ref{useful} that if $D_{\kappa} = \bigoplus\limits_{\kappa} \Z(p^\infty)$ is an epimorphic image of $A$, then $\kappa$ is finite, as needed.
\end{proof}

\section{Hereditarily Directly Finite Groups}\label{hdfg}

In this section, we introduce a new class of directly finite groups which will play a key role in our subsequent discussions.

A group $G$ is said to be an {\bf Extended Bassian} group if $G$ has the form $G = B \oplus D$, where $B$ is Bassian and $D$ is a torsion divisible group of the form $D = \bigoplus\limits_p\qc^{(n_p)}$, where each $n_p$ is finite. We denote the class of Extended Bassian groups by $\mathcal{EB}$; note that $\B \subseteq \EB$; clearly, $\EB \nsubseteq \mathcal{H}$ since no quasi-cyclic group $\qc$ is Hopfian, but we shall show below that $\EB \subseteq \RH$. The containments are shown in the diagram below.

\begin{proposition}\label{ebrhop} The class of groups $\mathcal{EB}$ is contained in the class $\mathcal{RH}$ of relatively Hopfian groups.
\end{proposition}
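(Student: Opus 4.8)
The plan is to argue by contradiction. Suppose $G = B \oplus D \in \EB$, with $B$ Bassian and $D = \bigoplus_p \qc^{(n_p)}$ (each $n_p$ finite), and suppose $G$ were isomorphic to a proper direct summand of a proper quotient; that is, there is a subgroup $K \neq 0$ with $G/K = G_1 \oplus C$, where $G_1 \cong G$ and $C \neq 0$. I would aim to show that in fact $C = 0$ necessarily, which contradicts $C \neq 0$ and thereby establishes $G \in \RH$. Before starting I would record the relevant structure of $G$: since $B$ is Bassian it is Hopfian, so $B$ can have no torsion-divisible summand (a copy of $\qc$ is never Hopfian), and consequently the torsion-divisible part of $G$ is exactly $D$. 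Moreover, recall that a Bassian group has finite torsion-free rank and that each primary component of its torsion subgroup is finite --- the latter being precisely the fact already exploited in the proof of Lemma~\ref{epicfin}. Writing $T_p$ for the (finite) $p$-component of the torsion part of $B$, this yields a decomposition of the torsion subgroup $tG = \bigoplus_p \left(T_p \oplus \qc^{(n_p)}\right)$.

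The first step is to locate $K$ and $C$ inside the torsion subgroup. As torsion-free rank is additive on short exact sequences, the sequence $0 \to K \to G \to G/K \to 0$ together with $G/K = G_1 \oplus C$ gives $r_0(G) = r_0(K) + r_0(G_1) + r_0(C) = r_0(K) + r_0(G) + r_0(C)$. Since $r_0(G) = r_0(B)$ is finite, this forces $r_0(K) = r_0(C) = 0$, so both $K$ and $C$ are torsion. In particular $K \subseteq tG$, which lets me pass to torsion subgroups and work one prime at a time: $t(G/K) = tG/K$, and taking $p$-primary components gives $t_p(G/K) = (T_p \oplus \qc^{(n_p)})/K_p$, where $K_p$ is the $p$-component of $K$. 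On the other hand, from $G/K = G_1 \oplus C$ with $G_1 \cong G$ one has $t_p(G/K) \cong (T_p \oplus \qc^{(n_p)}) \oplus C_p$, where $C_p$ is the $p$-component of $C$.

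The heart of the argument is the resulting isomorphism $(T_p \oplus \qc^{(n_p)})/K_p \cong (T_p \oplus \qc^{(n_p)}) \oplus C_p$, which I would analyze by splitting off divisible parts. The image of $\qc^{(n_p)}$ in the left-hand quotient is divisible, hence a direct summand, and it is a homomorphic image of $\qc^{(n_p)}$, so its $p$-rank is at most $n_p$; its complementary reduced part is a homomorphic image of $(T_p \oplus \qc^{(n_p)})/\qc^{(n_p)} \cong T_p$, hence a quotient of the finite group $T_p$. Comparing maximal divisible subgroups on the two sides, the left-hand one has $p$-rank at most $n_p$, while the right-hand one has $p$-rank $n_p + \delta$, where $\delta$ is the divisible $p$-rank of $C_p$; hence $\delta = 0$ and $C_p$ is reduced. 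Comparing the reduced complements then yields an isomorphism between a quotient of the finite group $T_p$ and $T_p \oplus C_p$, and a cardinality count forces $|C_p| = 1$, i.e.\ $C_p = 0$. As this holds for every prime $p$ and $C$ is torsion, $C = 0$, which is the desired contradiction.

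I expect the main obstacle to be controlling the divisible torsion part: a priori a quotient can manufacture new quasi-cyclic summands, so the crux is to show that passing from $G$ to a torsion quotient $G/K$ cannot raise the $p$-rank of the maximal divisible subgroup beyond the available $n_p$. This is exactly what the splitting-off-of-divisible-summands step accomplishes, using the injectivity of $\qc^{(n_p)}$ together with the finiteness of $T_p$. The two structural inputs that make the bookkeeping work --- that $B$ has finite torsion-free rank (needed to conclude $K$ and $C$ are torsion) and that the primary components $T_p$ are finite (needed for the final cardinality count) --- are the key leverage supplied by the Bassian hypothesis.
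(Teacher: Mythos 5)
Your proof is correct, but it takes a genuinely different route from the paper's. The paper argues via the machinery of \cite{CDGK}: it notes that $G/t(G)\cong B/t(B)$ is torsion-free of finite rank (hence Hopfian), shows each $t_p(G)=t_p(B)\oplus\qc^{(n_p)}$ is relatively Hopfian by combining Corollary 2.13, Proposition 2.14 and Proposition 2.17 of \cite{CDGK}, and then invokes \cite[Theorem 4.1]{CDGK} (a criterion for relative Hopficity via the fully invariant subgroup $t(G)$) to conclude. You instead work directly from the definition: assuming $G/K=G_1\oplus C$ with $G_1\cong G$, additivity of torsion-free rank forces $K$ and $C$ to be torsion, and the prime-by-prime comparison $\left(T_p\oplus\qc^{(n_p)}\right)/K_p\cong T_p\oplus\qc^{(n_p)}\oplus C_p$ --- split into maximal divisible subgroup and reduced complement --- pins down $C_p=0$ via the bound $n_p$ on the divisible rank and a cardinality count on the finite reduced parts. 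The steps all check out: $t(G/K)=t(G)/K$ once $K$ is known to be torsion; $T_p=t_p(B)$ is indeed finite because a Bassian group has finite $p$-ranks and, being Hopfian, admits no quasi-cyclic summand; and the finiteness of $n_p$ is exactly what makes $n_p+\delta\le n_p$ force $\delta=0$. What your argument buys is self-containedness, avoiding the four external citations to \cite{CDGK} entirely at the cost of some extra bookkeeping; the paper's version buys brevity and places the proposition inside the general local-global framework developed there for relatively Hopfian groups.
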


\begin{proof} Suppose then that $G = B \oplus D$ is in $\mathcal{EB}$ with $D$ having a primary decomposition $D = \bigoplus\limits_p D_p$. Now, $t(G) = t(B) \oplus D$, so that $G/t(G) \cong B/t(B)$ and it follows from \cite[Proposition 3.11]{CDG} that $B/t(B)$ is torsion-free of finite rank. Therefore, $G/t(G)$ is torsion-free of finite rank and hence so is Hopfian. However, since $B$ is Bassian, $t_p(B)$ is a finite group, and as $t_p(G) = t_p(B) \oplus D_p$, it follows from \cite[Corollary 2.13]{CDGK} that $t_p(G)$ will be relatively Hopfian if and only if $D_p$ is relatively Hopfian for all primes $p$; the latter statement is true by \cite[Proposition 2.17]{CDGK}. According to \cite[Proposition 2.14]{CDGK}, we conclude that $t_p(G)$ is relatively Hopfian. Since $t(G)$ is clearly fully invariant in $G$, \cite[Theorem 4.1]{CDGK} gives us that $G$ is relatively Hopfian. Since $G$ was arbitrary in $\mathcal{EB}$, we detect that $\mathcal{EB} \subseteq \mathcal{RH}$, as promised.
\end{proof}

\bigskip
\bigskip

\setlength{\unitlength}{2mm}
\begin{picture}(40,50)(-10,0)
	\drawline(20,50)(20,40)
	\drawline(20,40)(20,30)
	\drawline(20,30)(10,20)
	\drawline(20,30)(30,20)
	\drawline(10,20)(20,10)
	\drawline(30,20)(20,10)
	\put(20,50){\circle*{2}}
	\put(20,40){\circle*{2}}
	\put(20,30){\circle*{2}}
	\put(10,20){\circle*{2}}
	\put(30,20){\circle*{2}}
	\put(20,10){\circle*{2}}
	\put(22,50){$\mathcal{DF}$}
	\put(22,40){$\mathcal{WH}$}
	\put(22,30){$\mathcal{RH}$}
	\put(7,20){$\mathcal{H}$}
	\put(32,20){$\mathcal{BE}$}
	\put(22,10){$\mathcal{B}$}
\end{picture}

\centerline{{ Containment Relationships}}

\bigskip

It was noted in \cite{CDG} that subgroups of Bassian groups are again Bassian and our next result shows that the class $\EB$ enjoys the same critical property.

\begin{proposition}\label{ebebh} If $G \in \EB$, then every subgroup of $G$ is again in $\EB$; equivalently, $\EB = \EB_h$.
\end{proposition}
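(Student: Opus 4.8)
The plan is to prove $\EB_h \subseteq \EB$ (the reverse inclusion being immediate from $\B \subseteq \EB$ and the fact that the zero subgroup and $G$ itself are handled trivially), by taking an arbitrary subgroup $H$ of a group $G = B \oplus D \in \EB$ and manufacturing an explicit extended Bassian decomposition of $H$. The decisive choice is to split off from $H$ \emph{not} its full divisible part, but only its maximal torsion divisible subgroup: let $\bar D$ denote the maximal torsion divisible subgroup of $H$. Since divisible subgroups are direct summands, $H = \bar D \oplus \bar B$ for some complement $\bar B$, and by maximality of $\bar D$ the complement $\bar B$ contains no nonzero torsion divisible subgroup, so every $t_p(\bar B)$ is reduced. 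The torsion-free divisible part of $H$ is thereby retained inside $\bar B$; this is exactly what we want, since a finite-rank torsion-free divisible group is itself Bassian and so may safely be absorbed into the Bassian summand.

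First I would control $\bar D$. Because $B$ is Bassian, its primary components are finite, so $\mathrm{div}(B)$ is torsion-free and the torsion part of $\mathrm{div}(G) = \mathrm{div}(B) \oplus D$ is precisely $D$. Being divisible and torsion, $\bar D$ lies in $\mathrm{div}(G)$ and is therefore contained in $D = \bigoplus_p \qc^{(n_p)}$; a divisible subgroup of this group has the form $\bigoplus_p \qc^{(m_p)}$ with $m_p \leq n_p$. Hence $\bar D$ is torsion divisible with each primary rank finite, i.e.\ exactly the correct candidate for the divisible summand.

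Next I would show $\bar B$ is Bassian by verifying that it has finite torsion-free rank and finite primary components, which already forces the Bassian property: an embedding of $\bar B$ into a proper quotient $\bar B/K$ would, on comparing torsion-free ranks, force $K$ to be torsion, and then for a prime $p$ with $K_p \neq 0$ it would inject the finite group $t_p(\bar B)$ into its proper quotient $t_p(\bar B)/K_p = t_p(\bar B/K)$, which is impossible on cardinality grounds. Finiteness of the torsion-free rank is immediate from $r_0(\bar B) \leq r_0(G) = r_0(B) < \infty$, the last bound being part of the already recorded structure of Bassian groups. For the primary components, $t_p(\bar B)$ embeds in $t_p(G) = t_p(B) \oplus \qc^{(n_p)}$, a $p$-group of finite rank, so $t_p(\bar B)$ is a \emph{reduced} $p$-group of finite rank; the crux is that such a group must be finite. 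I would establish this by a short lemma: a basic subgroup $C$ of a reduced $p$-group $A$ of finite rank is a finite direct sum of cyclic groups, hence bounded, hence (being pure and bounded) a direct summand of $A$ whose complement $A/C$ is divisible and therefore zero, forcing $A = C$ to be finite. Granting this, $t_p(\bar B)$ is finite for every $p$, so $\bar B$ is Bassian and $H = \bar B \oplus \bar D \in \EB$, as required.

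The main obstacle is precisely this finite-rank reduced $p$-group lemma together with the correctness of the decomposition. One must resist peeling off the entire divisible part of $H$ — which would erroneously deposit a torsion-free divisible chunk outside the Bassian summand — and instead remove only the maximal torsion divisible subgroup; it is exactly the reducedness of the complement's primary components, guaranteed by this maximal choice, that supplies the hypothesis feeding the lemma and converts "finite rank" into "finite'' for each $t_p(\bar B)$.
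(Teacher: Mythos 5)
Your argument is correct, and its skeleton is the same as the paper's: both proofs take an arbitrary subgroup $H$ of $G = B \oplus D$ and split off the \emph{maximal torsion} divisible subgroup, $H = \bar D \oplus \bar B$, then observe that $\bar D$ sits inside $D$ and therefore has the required shape $\bigoplus_p \qc^{(m_p)}$ with all $m_p$ finite. (One small slip in your framing: you announce that you will prove $\EB_h \subseteq \EB$, which is trivial; the substantive inclusion, and the one your argument actually establishes, is $\EB \subseteq \EB_h$.) Where you genuinely diverge is in showing that the complement $\bar B$ is Bassian. The paper does this in one line by embedding $\bar B \cong H/D_0$ into $G/D \cong B$ via $H/(H\cap D) \cong (H+D)/D$ and invoking the fact, quoted from \cite{CDG}, that subgroups of Bassian groups are Bassian; note, though, that this chain tacitly identifies $D_0$ with $H \cap D$, which is not automatic since $H \cap D$ need not be divisible, so the paper's version needs a word of repair that your route avoids entirely. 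You instead verify the numerical invariants of $\bar B$ by hand: $r_0(\bar B) \leq r_0(B) < \infty$; each $t_p(\bar B)$ is a reduced $p$-group of finite rank (reduced by maximality of $\bar D$, finite rank because $t_p(G) = t_p(B) \oplus \qc^{(n_p)}$ has finite socle), hence finite by your basic-subgroup lemma; and then you re-derive the relevant piece of the Bassian classification directly (additivity of torsion-free rank forces a nonzero kernel to be torsion, and then the finite group $t_p(\bar B)$ cannot embed into its proper quotient $t_p(\bar B)/K_p$, the identification $t_p(\bar B/K) \cong t_p(\bar B)/K_p$ being valid precisely because $K$ has already been forced to be torsion). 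All of these steps check out. The trade-off is clear: the paper's route is shorter and leans on the hereditariness of $\B$ and the Main Theorem of \cite{CDG}, while yours is longer but self-contained, essentially reproving the fragment of that classification it needs and sidestepping the delicate identification of $D_0$ with $H \cap D$.
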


\begin{proof}
Suppose that $G = B \oplus D$, where $B$ is Bassian and $D$ is torsion divisible with each $p$-primary component of finite rank. Let $H$ be an arbitrary subgroup of $G$ and write $H = D_0 \oplus H_0$, where $D_0$ is the maximal torsion divisible subgroup of $H$. Then, $D_0 \leq D$ and so $D_0$ has the form $D_0 = \bigoplus\limits_p\qc^{(k_p)}$, where each $k_p$ is finite. However, $$H_0 \cong H/D_0 = H/(H \cap D) \cong (H + D)/D \leq G/D \cong B,$$ and thus $H_0$ is isomorphic to a subgroup of $B$, whence it is Bassian and $H$ is then Extended Bassian. Hence, if $G \in \EB$, then $G \in \EB_h$ and since the latter is contained in $\EB$, we have equality $\EB = \EB_h$, as desired.
\end{proof}

Suppose now that $G$ is an arbitrary group in $\DF_h$. Clearly, $r_0(G)$ is finite; otherwise, $G$ would have a free subgroup of infinite rank which is not in $\DF$. Write $G = D \oplus G'$, where $D$ is the maximal torsion divisible subgroup of $G$. Then, $D = \bigoplus\limits_p\qc^{(k_p)}$ for some $k_p$, and the direct finiteness of $D$ assures that each $k_p$ is finite. Now, if $T_p$ stands for the $p$-component of the torsion subgroup of $G'$, then $T_p[p]$ is again in $\DF$, and thus the socle must have finite dimension. Since $G'$ has no torsion divisible subgroup, we conclude that each $T_p$ is actually finite.

Now, write $G' = \Q^{(k_0)} \oplus G''$, where $G''$ has no torsion-free divisible subgroup; note that $G''$ is then reduced. Since $k_0 \leq r_0(G') = r_0(G)$, clearly $k_0$ is finite and $r_0(G'')$ is also finite. Thus, $G''$ is a reduced group with finite $p$-primary subgroups and finite torsion-free rank, and so it follows from the classification of Bassian groups obtained in \cite[Main Theorem]{CDG} that $G''$ is Bassian. As $k_0$ is also finite, the same classification yields that $G'$ is Bassian. Consequently, we have established the following statement:

\begin{proposition}\label{dfhebh} If $G \in \DF_h$, then $G \in \EB = \EB_h$.
\end{proposition}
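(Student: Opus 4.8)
The plan is to reduce $G$ to its divisible and reduced constituents and to match each piece against the definition of $\EB$, so that we recover a decomposition $G = D \oplus G'$ in which $D$ is torsion divisible of the required shape and $G'$ is Bassian; since $\EB = \EB_h$ by Proposition~\ref{ebebh}, membership in $\EB$ will automatically yield membership in $\EB_h$. First I would record that $r_0(G)$ is finite, for a free subgroup of infinite rank satisfies $\bigoplus_{\aleph_0}\Z \cong (\bigoplus_{\aleph_0}\Z) \oplus (\bigoplus_{\aleph_0}\Z)$ and hence is not directly finite, contradicting $G \in \DF_h$. Next I would split off the maximal divisible subgroup, writing $G = D \oplus \Q^{(k_0)} \oplus G''$, where $D = \bigoplus_p \qc^{(k_p)}$ is the torsion divisible part, $\Q^{(k_0)}$ is the torsion-free divisible part, and $G''$ is reduced. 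Direct finiteness of the subgroups $\qc^{(k_p)}$ forces each $k_p$ to be finite, since otherwise $\qc^{(\aleph_0)} \cong \qc^{(\aleph_0)} \oplus \qc^{(\aleph_0)}$ would produce a subgroup outside $\DF$; and $k_0 \le r_0(G) < \infty$ in any case.

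The heart of the argument, and the step I expect to be the main obstacle, is showing that the reduced torsion of $G''$ is finite at each prime. For a fixed prime $p$ let $T_p$ be the $p$-component of the torsion subgroup of $G''$; since all torsion divisibility was absorbed into $D$, the group $T_p$ is reduced, and it suffices to prove $T_p$ is finite. The socle $T_p[p]$ is a subgroup of $G$, hence directly finite, and an elementary abelian $p$-group is directly finite precisely when it is finite-dimensional (again because an infinite-dimensional space over $\mathbb{F}_p$ is isomorphic to a proper direct summand of itself), so $T_p[p]$ is finite. I would then upgrade ``finite socle'' to ``finite group'' using structure theory: a basic subgroup $B$ of $T_p$ embeds its socle into $T_p[p]$, so $B$ is a direct sum of only finitely many finite cyclic groups and is therefore bounded; a bounded pure subgroup splits, giving $T_p = B \oplus C$ with $C \cong T_p/B$ divisible, and the reducedness of $T_p$ forces $C = 0$, whence $T_p = B$ is finite. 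This passage from finite socle to finiteness is the one genuinely technical point; the rest is bookkeeping.

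Finally I would assemble the conclusion. Having shown that $r_0(G'') \le r_0(G) < \infty$, that every $p$-component of $G''$ is finite, and that $G''$ is reduced, the classification of Bassian groups in \cite[Main Theorem]{CDG} gives that $G''$ is Bassian; since $k_0$ is finite, the same classification applies to $\Q^{(k_0)} \oplus G''$, which I denote $G'$, yielding $G' \in \B$. Then $G = D \oplus G'$ exhibits $G$ as Extended Bassian, and Proposition~\ref{ebebh} upgrades this to $G \in \EB_h$, as required. The one compatibility point to check when invoking the classification is that adjoining the finite-rank torsion-free divisible summand $\Q^{(k_0)}$ does not disturb the Bassian property; this is exactly what the cited theorem guarantees, since it constrains only the torsion-free rank (kept finite) and the primary components (kept reduced and finite).
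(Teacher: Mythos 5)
Your proposal is correct and follows essentially the same route as the paper: establish that $r_0(G)$ is finite, split off the maximal divisible subgroup, use direct finiteness of the socles $T_p[p]$ together with reducedness to conclude each primary component is finite, and then invoke the classification of Bassian groups from \cite{CDG} to recognize the reduced part (and its sum with $\Q^{(k_0)}$) as Bassian. The only difference is that you spell out, via basic subgroups, the standard fact that a reduced $p$-group with finite socle is finite, which the paper leaves implicit.
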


This simple proposition enables us to give a complete description of hereditarily $\X$ classes for many subclasses of $\DF$. Concretely, we are prepared to prove the following main assertion:

\begin{theorem}\label{classher} (i) If $\EB \subseteq \X \subseteq \DF$, then $\X_h = \EB$;

(ii) If $\B \subseteq \X \subseteq \mathcal{H}$, then $\X_h = \B$. \end{theorem}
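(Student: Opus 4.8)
The plan is to exploit the monotonicity of the hereditary operator recorded in Proposition~\ref{scprop}(i) together with the two fixed points already isolated, namely $\EB = \EB_h$ from Proposition~\ref{ebebh} and the collapse $\DF_h \subseteq \EB$ from Proposition~\ref{dfhebh}. Both parts will then follow by squeezing $\X_h$ between a lower and an upper bound that happen to coincide.

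For part (i), I would first note that $\EB \subseteq \X$ forces $\EB_h \subseteq \X_h$ by Proposition~\ref{scprop}(i), and since $\EB_h = \EB$ by Proposition~\ref{ebebh}, this yields $\EB \subseteq \X_h$. In the other direction, $\X \subseteq \DF$ gives $\X_h \subseteq \DF_h$, and Proposition~\ref{dfhebh} tells us $\DF_h \subseteq \EB$; hence $\X_h \subseteq \EB$. Chaining these, $\EB \subseteq \X_h \subseteq \EB$, so $\X_h = \EB$, as claimed. No genuine obstacle arises here: the work has all been front-loaded into Propositions~\ref{ebebh} and~\ref{dfhebh}.

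For part (ii), the lower bound comes from the fact (noted from \cite{CDG}) that subgroups of Bassian groups are Bassian, i.e. $\B = \B_h$; combined with $\B \subseteq \X$ and Proposition~\ref{scprop}(i) this gives $\B \subseteq \X_h$. For the reverse inclusion, I would take an arbitrary $G \in \X_h$. Since $\X \subseteq \mathcal{H} \subseteq \DF$, both $G$ and all of its subgroups lie in $\DF$, so $G \in \DF_h$ and therefore $G \in \EB$ by Proposition~\ref{dfhebh}; write $G = B \oplus D$ with $B$ Bassian and $D = \bigoplus_p \qc^{(n_p)}$, each $n_p$ finite.

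The one point that needs care --- and the only place where the hypothesis $\X \subseteq \mathcal{H}$ (rather than merely $\X \subseteq \DF$) is used --- is to show $D = 0$. Here I would invoke that $G$ is Hopfian and that a direct summand of a Hopfian group is again Hopfian: if $\psi: A \to A$ is a surjection on a summand $A$ of $G = A \oplus C$, then $\psi \oplus \mathrm{id}_C$ is a surjective, hence injective, endomorphism of $G$, forcing $\psi$ injective. Consequently $D$, being a summand of the Hopfian group $G$, is Hopfian; but if some $n_p \geq 1$ then $\qc$ is a summand of $D$, hence of $G$, and $\qc$ is not Hopfian --- a contradiction. Thus $D = 0$ and $G = B \in \B$, giving $\X_h \subseteq \B$ and finally $\X_h = \B$. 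I expect this divisible-killing step to be the crux: everything else is formal manipulation of the inclusions, whereas this is where the Hopfian hypothesis genuinely bites and distinguishes part (ii) from part (i).
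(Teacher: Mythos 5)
Your proposal is correct and follows essentially the same route as the paper: squeeze $\X_h$ between $\EB_h=\EB$ and $\DF_h\subseteq\EB$ for part (i), and for part (ii) use $\B=\B_h$ for the lower bound and the decomposition $G=B\oplus D$ from $\DF_h\subseteq\EB$ together with Hopficity to kill $D$. The only difference is that you spell out the step the paper leaves implicit (a direct summand of a Hopfian group is Hopfian, while $\qc$ is not), and your justification of that step is valid.
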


\begin{proof} (i) From our hypothesis and Proposition \ref{scprop} it must be that $\EB_h \subseteq \X_h \subseteq \DF_h$, and thus it follows from Proposition \ref{dfhebh} for the last term that $\DF_h \subseteq \EB_h$. So, $\X)h = \EB_h = \EB$.
	
(ii) If $G \in \X_h$, then as $\X \subseteq \mathcal{H} \subseteq \DF$, we have $G \in \DF_h = \EB$. Now, if $G = B \oplus D$, with $B$ Bassian and $D$ torsion divisible, then $G \in \mathcal{H}$ implies that $D = \{0\}$, and hence $G \in \B = \B_h$. Thus, $\B_h \subseteq \X_h \subseteq \B_h = \B$, and then $\X_h = \B$, as wanted.
\end{proof} 	

\begin{remark} Part(ii) of Theorem \ref{classher} could be derived directly from the observation in \cite[Proposition 2.1.]{DG} that the class of Bassian groups coincides with the class of hereditarily Hopfian groups, $\B = \mathcal{H}_h$; nevertheless, we preferred to use the general setting of Proposition~\ref{dfhebh} to get a more conceptual proof.
\end{remark}

\section{Super Directly Finite Groups}\label{sdfg}

We now investigate the groups which are super directly finite.

\subsection{Divisible Groups}

Letting $D$ be an arbitrary divisible group, then $D$ has the form $D = \bigoplus \Q \oplus \bigoplus\limits_{p \in \mathcal{P}} \qc$, so that $r_0(D)$ determines the number of copies of $\Q$, while for each prime $p$, $r_p(D)$ determines the number of copies of $\qc$ (see \cite{F1,F}). The next result characterizes the divisible groups which are super directly finite.

\begin{proposition}\label{sdfdiv} A divisible group $D$ is super directly finite if, and only if, each of the ranks $r_0(D), r_p(D)$ is finite.
\end{proposition}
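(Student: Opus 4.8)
The plan is to characterize when a divisible group $D = \bigoplus \Q^{(r_0)} \oplus \bigoplus_{p}\qc^{(r_p)}$ is super directly finite, i.e. when every epimorphic image of $D$ is again directly finite. The key structural fact I would exploit is that epimorphic images of divisible groups are again divisible, so the entire problem reduces to understanding which divisible groups occur as quotients of $D$ and whether they avoid being $ID$ groups (groups with an isomorphic direct summand).

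First I would dispose of the easy direction. Suppose some rank is infinite. If $r_0(D)$ is infinite, then $D$ has a torsion-free divisible direct summand $\Q^{(\aleph_0)}$, and by Proposition \ref{inftf} (applied to a torsion-free group of infinite rank) one can produce $\bigoplus_{\aleph_0}\qc$ as a homomorphic image; alternatively, and more directly, $\Q^{(\aleph_0)} \cong \Q^{(\aleph_0)} \oplus \Q^{(\aleph_0)}$ already shows $\Q^{(\aleph_0)}$ is not directly finite, so $D$ itself fails to be in $\DF$. If instead some $r_p(D)$ is infinite, then $D$ has a direct summand $\qc^{(\aleph_0)}$, and again $\qc^{(\aleph_0)} \cong \qc^{(\aleph_0)} \oplus \qc^{(\aleph_0)}$ exhibits an isomorphic proper summand, so $D \notin \DF$. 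In either case $D$ is not even directly finite, hence certainly not super directly finite, giving the contrapositive of one implication.

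For the converse, assume all ranks $r_0(D), r_p(D)$ are finite. Let $X$ be an arbitrary epimorphic image of $D$; then $X$ is divisible, so $X = \bigoplus \Q^{(s_0)} \oplus \bigoplus_p \qc^{(s_p)}$ for some ranks $s_0, s_p$. The crucial point is that passing to a quotient cannot increase the torsion-free rank, and for each prime the quotient's $p$-rank is likewise controlled: I would argue $s_0 \leq r_0(D)$ and $s_p \leq r_p(D)$ for every $p$. This follows because $r_0(X) = \dim_\Q(X \otimes \Q)$ is a right-exact type invariant bounded by $r_0(D)$, and similarly $s_p = \dim(X[p]) \leq \dim(D[p]) = r_p(D)$ since the functor $(-)[p]$ behaves well under the relevant divisible quotients (one can also read this off from Proposition \ref{useful} applied componentwise, or from the $basrank$ exact sequence used there). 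Thus every rank of $X$ is finite. A divisible group all of whose ranks are finite is directly finite: if it had an isomorphic proper direct summand, that summand would have strictly smaller ranks in at least one coordinate yet be isomorphic to the whole, contradicting finiteness. Hence $X \in \DF$, and since $X$ was arbitrary, $D \in \DF^s$.

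The main obstacle I expect is pinning down rigorously that quotients cannot inflate any of the ranks, since for divisible groups the behaviour of $p$-ranks under epimorphisms is the subtle ingredient; the torsion-free rank bound is routine, but controlling each $r_p$ simultaneously is where Proposition \ref{useful} (or the companion rank inequality from \cite{A}) does the real work. Once that uniform boundedness of ranks is secured, the equivalence \textquotedblleft all ranks finite $\Leftrightarrow$ directly finite\textquotedblright\ for divisible groups closes the argument cleanly, and the super property follows because the class of finite-rank divisible groups is evidently closed under the (divisible) quotients that arise.
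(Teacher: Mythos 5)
Your overall strategy --- quotients of divisible groups are divisible, so it suffices to bound the ranks of the quotient --- is essentially the paper's, and both the easy direction and the final step (a divisible group with all ranks finite is directly finite) are fine. But the central inequality you assert, $s_p = \dim(X[p]) \leq \dim(D[p]) = r_p(D)$, is false: the functor $(-)[p]$ is left exact, not right exact, and $p$-ranks can strictly increase under epimorphisms of divisible groups. The standard counterexample is $D = \Q$, which has $r_p(\Q) = 0$ for every $p$, yet admits $\Q/\Z \cong \bigoplus_{q}\mathbb{Z}(q^{\infty})$ as a quotient, with $p$-rank equal to $1$ for every $p$. So the torsion-free part of $D$ can contribute new quasi-cyclic summands to the image, and your claimed bound $s_p \le r_p(D)$ cannot be ``read off'' from Proposition \ref{useful}; what that proposition actually yields is the weaker bound $s_p \le r_p(D) + r_0(D)$.

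The gap is repairable, and the repair is exactly what the paper does: write $D = F \oplus T$ with $F$ torsion-free divisible and $T$ torsion, so that an epimorphic image $X$ decomposes as $X = \phi(F) \oplus Y$ with $Y$ a quotient of $T$. Proposition \ref{useful} bounds every rank of $\phi(F)$ by $r_0(D)$, and a separate (easy) fact about quotients of torsion divisible groups bounds $r_p(Y)$ by $r_p(D)$. Summing gives $s_p \le r_0(D) + r_p(D) < \infty$, which is all your final step needs. As written, however, your argument asserts a false intermediate inequality and attributes it to a proposition that does not prove it, so the key step does not go through without this correction.
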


\begin{proof} The necessity is evident since if any of the rank invariants is infinite $D$ will have an isomorphic proper direct summand.
	
Reciprocally, suppose that each of the ranks $r_0(D), r_p(D)$ is finite; write $D = F \oplus T$, where $T$ is the torsion subgroup of $D$ and $F$ is torsion-free divisible. Thus, utilizing \cite[Proposition 2.17(b)]{CDGK}, $D$ is directly finite, and if $\phi :G \twoheadrightarrow X$ is an epimorphism, then $X = D_1 \oplus Y$, where $D_1 = \phi(F)$ and $Y = \phi(T)$. Furthermore, an application of Proposition~\ref{useful} above forces the result that the torsion-free rank $r_0(D)$ of $D_1$ and each of its ranks $r_p(D)$ are finite. Employing \cite[Proposition 2.15]{CDGK} to $Y$, we again get that each rank $r_p(Y)$ is finite, so that the ranks $r_p(X)$ and its torsion-free rank are all finite; so, Proposition 2.17 in \cite{CDGK} guarantees the pursued result that $X$ is directly finite. Since $X$ was an arbitrary epimorphic image of $D$, we infer that $D$ is super directly finite, as formulated.
\end{proof}

Our next result is a simplified version of \cite[Theorem 2.6]{CDGK} adequate for our current requirements.

\begin{proposition}\label{redplusdiv} Let $G = D \oplus R$ be a group, where $D$ is divisible and $R$ is reduced. Then, $G$ is super directly finite if, and only if, both $D$ and $R$ are super directly finite.
\end{proposition}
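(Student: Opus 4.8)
The plan is to treat the two implications separately, the forward direction being essentially formal and the reverse direction carrying all the content.

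For necessity, suppose $G = D \oplus R$ is super directly finite. Both $D$ and $R$ are epimorphic images of $G$ via the canonical projections, so by Proposition \ref{scprop}(ii) each of them again lies in $\DF^s$; that is, both $D$ and $R$ are super directly finite. This settles the \lq only if\rq\ part with no further work.

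For sufficiency, assume both $D$ and $R$ are super directly finite and let $\phi: G \twoheadrightarrow X$ be an arbitrary epimorphism; we must show that $X$ is directly finite. Write $X = V \oplus X_r$, where $V = \mathrm{div}(X)$ is the maximal divisible subgroup of $X$ and $X_r$ is reduced, and let $\pi: X \twoheadrightarrow X_r$ be the projection with kernel $V$. Since $\phi(D)$ is a homomorphic image of the divisible group $D$, it is divisible, hence $\phi(D) \leq V = \ker \pi$. Applying $\pi$ to the equality $X = \phi(D) + \phi(R)$ then gives $X_r = \pi \phi(R)$, so $X_r$ is an epimorphic image of $R$ and is therefore directly finite by the super direct finiteness of $R$.

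The crux, and the main obstacle, is to show that $V$ is directly finite; the difficulty is that $V$ may be strictly larger than $\phi(D)$, because the reduced summand $R$ can contribute divisible quotients to $X$. To control this, I would observe that $V$, being a direct summand of $X$, is itself an epimorphic image of $G$: composing $\phi$ with the projection onto $V$ gives an epimorphism $\psi: G \twoheadrightarrow V$ with $V = \psi(D) + \psi(R)$. Here $\psi(D)$ is a divisible epimorphic image of $D$, hence directly finite by hypothesis and so of finite rank invariants by Proposition \ref{sdfdiv}. As $\psi(D)$ is divisible it splits off, $V = \psi(D) \oplus W$, and the complement satisfies $W \cong V/\psi(D) \cong \psi(R)/(\psi(R) \cap \psi(D))$, which is a divisible epimorphic image of $R$; thus $W$ too is directly finite, again with finite ranks by Proposition \ref{sdfdiv}. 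Adding ranks shows that all the invariants $r_0(V)$ and $r_p(V)$ are finite, whence $V$ is directly finite by Proposition \ref{sdfdiv}.

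Finally, I would assemble the two halves. We have exhibited $X = V \oplus X_r$ as the splitting of $X$ into its maximal divisible subgroup and a reduced complement, both shown to be directly finite. If $X$ had an isomorphic proper direct summand, then $X \cong X \oplus K$ for some $K \neq 0$; passing to maximal divisible subgroups and to reduced quotients (both operations respecting direct sums, since $\mathrm{div}(-)$ is fully invariant) would yield $V \cong V \oplus \mathrm{div}(K)$ and $X_r \cong X_r \oplus (K/\mathrm{div}(K))$, and as at least one of $\mathrm{div}(K)$, $K/\mathrm{div}(K)$ is nonzero this contradicts the direct finiteness of $V$ or of $X_r$. Hence $X$ is directly finite; the same conclusion may alternatively be read off from the directly finite case recorded in \cite[Theorem 2.6]{CDGK}. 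Since $\phi$ was arbitrary, $G$ is super directly finite, completing the argument.
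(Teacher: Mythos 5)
Your proof is correct, and while the overall strategy (split off a divisible piece, realize the complement as an epimorphic image of $R$, control rank invariants of the divisible part via Proposition \ref{sdfdiv}) matches the paper's, you choose a genuinely different decomposition. The paper splits $X = \phi(D) \oplus Y$ directly along the image of $D$; then $Y \cong X/\phi(D)$ is an epimorphic image of $G/D \cong R$, hence directly finite, and any divisible material contributed by $R$ is simply absorbed into $Y$. The final step there is a citation of \cite[Proposition 2.17]{CDGK} to combine a divisible summand with finite ranks and a directly finite complement. You instead split along $V = \mathrm{div}(X)$, which costs you an extra step --- you must show that the possibly larger group $V \supseteq \phi(D)$ still has finite rank invariants, which you do correctly by writing $V = \psi(D) \oplus W$ with $W$ a divisible epimorphic image of $R$ and adding ranks --- but it buys you a fully invariant decomposition $X = V \oplus X_r$ into divisible and reduced parts, so that your concluding argument (passing $X \cong X \oplus K$ through $\mathrm{div}(-)$ and the reduced quotient) is elementary and self-contained rather than resting on the external Proposition 2.17. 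Both routes are sound; yours is slightly longer but replaces the final citation with a transparent argument, and the paper itself acknowledges the result as a simplified form of \cite[Theorem 2.6]{CDGK}, which you also note as an alternative.
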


\begin{proof} The necessity is automatically true since any super class is closed under homomorphic images and both $D$ and $R$ are such.
	
Oppositely, suppose that $D$ and $R$ are both super directly finite, and let $X$ be an arbitrary epimorphic image of $G$ via the map $\phi$. Now, $\phi(D)$ is divisible and so $X = \phi(D) \oplus Y$ for some $Y$. Consider the composition $\psi : G \twoheadrightarrow X \twoheadrightarrow G/\phi(D)$. Therefore, $G/\ker \psi \cong X/\phi(D)$, so that $Y$ is an epimorphic image of $G/\Ker \psi$. Since $\Ker \psi$ contains $D$, $G/\Ker \psi $ is itself an epic image of $G/D$, so that $Y$ is an epimorphic image of $R$ and it is thus directly finite.
	
Furthermore, by assumption, $D$ is super directly finite and so it follows from Proposition \ref{sdfdiv} above that all the rank invariants of $D$ are finite. Then, arguing as in the proof of that proposition, and invoking Proposition 2.15 of \cite{CDGK} in combination with Proposition~\ref{useful}, we see that all the rank invariants of $\phi(D)$ are again finite. Appealing to \cite[Proposition 2.17]{CDGK}, we have that $X = \phi(D) \oplus Y$ is directly finite. Since $X$ was an arbitrary epimorphic image of $G$, we finally established that $G$ is super directly finite, as stated.  	
\end{proof}

\subsection{Reduced Groups}

Throughout this subsection, we will assume that we are dealing with reduced groups which are directly finite.

Our first result is a simple consequence of Proposition \ref{inftf} above.

\begin{proposition}\label{finrk} Let $G$ be a reduced torsion-free group in $\mathcal{DF}^s$. Then, $r_0(G)$ is finite.
\end{proposition}

\begin{proof} If not, the quotient $G/tG$ is in $\mathcal{DF}$ and, moreover, is torsion-free of infinite rank. Owing to Proposition \ref{inftf}, we see that $G/tG$, and hence $G$ itself, has an epimorphic image of the form $\bigoplus\limits_{\aleph_0}\qc$, but such a group is obviously not directly finite - contradiction.
\end{proof}

Our next result provides useful information about the ranks of primary components of groups in $\mathcal{DF}^s$.

\begin{proposition}\label{nottf} Let $G$ be a reduced group in $\mathcal{DF}^s$ which is not torsion-free. Then, each $p$-primary component of $G$ is finite.
\end{proposition}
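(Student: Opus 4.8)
The plan is to treat one prime $p$ at a time and to show that the $p$-primary component $T_p := t_p(G)$ is finite; the hypothesis that $G$ is not torsion-free only guarantees that at least one such component is non-trivial, but the argument is uniform in $p$ and also covers the primes with $T_p = 0$ trivially. The starting point is that, since $G \in \DF^s$, every epimorphic image of $G$ lies in $\DF$. I would apply this to the natural quotient map $G \twoheadrightarrow G/pG$. Here $p(G/pG) = 0$, so $G/pG$ is an elementary abelian $p$-group, that is, $G/pG \cong \Z(p)^{(\kappa)}$ for some cardinal $\kappa$. If $\kappa$ were infinite then, since $\kappa + \kappa = \kappa$, we would have $G/pG \cong (G/pG) \oplus (G/pG)$, exhibiting a proper isomorphic direct summand and contradicting direct finiteness. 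Consequently $\dim_{\mathbb{F}_p}(G/pG) = \kappa$ is finite.

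Next I would transfer this finiteness to the primary component. The inclusion $T_p \hookrightarrow G$ induces a map $T_p/pT_p \to G/pG$ whose kernel is $(T_p \cap pG)/pT_p$, and a short check shows $T_p \cap pG = pT_p$: if $x \in T_p$ has order $p^m$ and $x = pg$, then $p^{m+1}g = 0$ forces $g$ to have $p$-power order, so $g \in T_p$ and $x \in pT_p$. Hence the induced map is injective and $basrank(T_p) = \dim(T_p/pT_p) \leq \dim(G/pG)$ is finite. Thus a basic subgroup $B$ of $T_p$ has finite rank, so it is a finite direct sum of finite cyclic groups; in particular $B$ is finite and hence bounded. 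Being a basic subgroup, $B$ is pure in $T_p$, and a bounded pure subgroup is a direct summand, so $T_p = B \oplus C$ with $C \cong T_p/B$ divisible. Since $G$ is reduced, so is its subgroup $T_p$, whence $C = 0$; therefore $T_p = B$ is finite, as required.

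The steps are individually routine, and I expect the only genuine subtlety to be the transfer recorded in the second paragraph: direct finiteness is a hypothesis about \emph{quotients} of $G$, whereas the desired conclusion concerns the \emph{subgroup} $T_p$. The crux is therefore to capture the relevant $p$-socle data of $T_p$ inside the quotient $G/pG$, which is exactly what the identity $T_p \cap pG = pT_p$ accomplishes, and then to promote finiteness of a basic subgroup to finiteness of $T_p$ itself by invoking reducedness. I note in passing a more conceptual route: by Proposition \ref{scprop}(ii) the subgroup $T_p$ belongs to $\DF_h$, which coincides with $\EB$ by Proposition \ref{dfhebh}; hence $T_p$ is Extended Bassian and, being reduced, is a reduced Bassian $p$-group, whose single primary component is finite by the classification of Bassian groups. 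I would nonetheless prefer the self-contained computation sketched above.
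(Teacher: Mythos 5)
Your main argument is correct, and it takes a genuinely different and noticeably more elementary route than the paper's. The paper splits into a torsion case and a mixed case: in the torsion case it uses Szele's Theorem to realize a basic subgroup of $T_p$ as an endomorphic image of $T_p$, and in the mixed case it first invokes Proposition \ref{finrk} to make $G/tG$ countable so that Corner's extension of Szele's Theorem (Theorem \ref{corner}) applies; in either case the basic subgroup lands in $\DF$ and is then shown to be finite by looking at homocyclic summands or at $B/pB$. You bypass Szele, Corner and the case split altogether by observing that $G/pG$ is an epimorphic image of $G$, hence directly finite, hence a finite-dimensional vector space over the field with $p$ elements, and that the elementary identity $T_p\cap pG=pT_p$ embeds $T_p/pT_p$ into $G/pG$; the endgame (finite basrank forces a finite basic subgroup, bounded pure subgroups split off, reducedness kills the divisible complement) is then essentially the same as the paper's. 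What the paper's heavier machinery buys is the stronger fact that the basic subgroup is an actual endomorphic image of $G$ (and a vehicle for putting Corner's unpublished proof on record), none of which is needed for this proposition. One caveat concerning your closing aside only: Proposition \ref{scprop}(ii) as printed claims that subgroups of a group in $\X^s$ lie in $\X_h$, but membership in $\X^s$ constrains quotients, not subgroups (for $\X$ the class of divisible groups one has $\Q\in\X^s$ while $\Z\leq\Q$ is not in $\X$), so that alternative route rests on a misstated lemma; your self-contained computation is the one to keep.
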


\begin{proof}
We split the argument into two cases: (a) $G$ is torsion, and (b) $G$ is mixed.

\medskip
	
\noindent{\bf Case (a):} Let $G = \bigoplus\limits_{p \in \P}T_p$ be the usual primary decomposition of $G$. Suppose, for a contradiction, that some primary component $T_p$ is infinite and let $B$ be a basic subgroup of $T_p$. Thus, $B$ is an endomorphic image of $T_p$ by the quoted above Szele's Theorem, and so $B \in \mathcal{DF}$. If $B$ is infinite, then it either has an infinite homocyclic component, which is necessarily a summand of $B$, or it is unbounded. As $B$ is directly finite, the first situation manifestly cannot occur. However, if $B$ is unbounded, it has an epimorphic image $B/pB$ which again must be directly finite - impossible, since in this situation $B/pB$ is elementary of infinite rank.

\medskip
	
\noindent{\bf Case(b):} Assume now that $G$ is a reduced mixed group with torsion subgroup $tG = 	 \bigoplus\limits_{p \in \P}T_p$. It follows from Proposition \ref{finrk} that $r_0(G)$ is finite, and thus $G$ is an extension of the torsion group $tG$ by a torsion-free group $G/tG$ which is countable since $r_0(G/tG) = r_o(G)$ is finite. Hence the aforementioned Corner's Theorem \ref{corner} is applicable and, if $B_p$ is a basic subgroup of $T_p$, then $B_p$ is an endomorphic image of $G$, and so $B_p \in \mathcal{DF}^s$. The argument in case (a) shows that $B_p$ must be finite, and since it is basic in $T_p$, the latter must also be finite, as required.
\end{proof}

As a direct consequence, we extract:

\begin{corollary}\label{redbass} If $G \in \mathcal{DF}^s$ is reduced, then $G$ is Bassian.
\end{corollary}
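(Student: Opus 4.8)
The plan is to reduce the statement to the structural characterisation of Bassian groups established in \cite[Main Theorem]{CDG}, namely that a reduced group is Bassian precisely when its torsion-free rank is finite and each of its primary components is finite. Thus it suffices to verify these two numerical conditions for a reduced $G \in \DF^s$, using only that $\DF^s$ is closed under epimorphic images (Proposition \ref{scprop}(ii)) together with the two preceding propositions.

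First I would settle the torsion-free rank. Since $G/tG$ is a homomorphic image of $G$, it lies in $\DF$ and is torsion-free; were $r_0(G) = r_0(G/tG)$ infinite, Proposition \ref{inftf} would furnish an epimorphic image $\bigoplus\limits_{\aleph_0}\qc$ of $G/tG$, and hence of $G$, which is not directly finite---a contradiction. When $G$ is torsion-free this is exactly Proposition \ref{finrk}; the same short argument covers the mixed case, and this transition is the one point that needs a word of care, since Proposition \ref{finrk} is literally phrased for torsion-free groups while its proof in fact operates on $G/tG$. Hence $r_0(G)$ is finite.

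Next I would dispose of the primary components. If $G$ is torsion-free there is nothing to check, as every $p$-component is zero. Otherwise $G$ is a reduced group in $\DF^s$ that is not torsion-free, so Proposition \ref{nottf} applies verbatim and guarantees that each $p$-primary component of $G$ is finite.

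Finally, assembling these facts, $G$ is reduced, has finite torsion-free rank, and has finite primary components, so the classification of \cite[Main Theorem]{CDG} yields $G \in \B$. I do not anticipate a genuine obstacle here---the corollary really is an assembly of Propositions \ref{finrk} and \ref{nottf}---so the only place demanding attention is confirming that the finite-rank conclusion of Proposition \ref{finrk} persists for mixed $G$ (handled above by passing to $G/tG$) and that \emph{reduced, plus finite torsion-free rank, plus finite primary components} is exactly the hypothesis of the cited classification.
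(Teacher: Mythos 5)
Your proposal is correct and follows essentially the same route as the paper: the paper's proof likewise just combines Propositions \ref{finrk} and \ref{nottf} to get finiteness of $r_0(G)$ and of each $r_p(G)$, and then invokes the Main Theorem of \cite{CDG}. Your added remark that Proposition \ref{finrk} extends to mixed $G$ by passing to $G/tG$ is a fair observation of a point the paper leaves implicit (indeed, case (b) of Proposition \ref{nottf} already applies Proposition \ref{finrk} to a mixed group in exactly this way).
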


\begin{proof} It follows from the two propositions alluded to above that each of the ranks $r_0(G)$ and $r_p(G) (p \in \P)$ is finite, and the result then follows from the Main Theorem of \cite{CDG}.
\end{proof}

\subsection{General Case}

In this subsection, we combine our results from the two previous sections to investigate super directly finite groups which have both divisible and reduced components. To that target, let $G$ be a super directly finite group of the form $G = D \oplus R$, where $D$ is a non-zero divisible group and $R$ is a non-trivial reduced group; it follows from Proposition \ref{redplusdiv} that both $D$ and $R$ are super directly finite, and so Corollary \ref{redbass} applies to get that $R$ is Bassian. Applying Proposition \ref{finrk} to $D$, we see that $D$ is of the form $D = \Q^{(n_0)} \oplus \bigoplus\limits_p \qc^{(n_p)}$ with $n_0, n_p$ all finite. Set $B := \Q^{(n_0)} \oplus R$ and note that it follows from the classification of Bassian groups given in \cite[Main Theorem (ii)]{CDG} that $B$ is Bassian. Thus, any super directly finite group is of the form
$$G = B \oplus D, \ {\rm with} \ B \ {\rm Bassian \ and} \ D = \bigoplus\limits_p \qc^{(n_p)}, \ n_p \ {\rm finite \ for \ all} \ p.   \quad (*) $$

Let us move now to the more general situation of a class of groups $\mathcal{X}$ contained in the class of directly finite groups $\mathcal{DF}$. We see two different situations arising, depending on where $\mathcal{X}$ is situated relative to the classes $\mathcal{H}$ and $\mathcal{EB}$.

Suppose now that $\mathcal{X}$ is any class satisfying $\mathcal{B} \subseteq \mathcal{X} \subseteq \mathcal{H}$. Since $\mathcal{X} \subseteq \mathcal{H}$, we derive $\mathcal{X}^s \subseteq \mathcal{H}^s \subseteq \mathcal{DF}^s$. So, if $G \in \mathcal{X}^s$, then $G$ has the form $G = B \oplus D$ as in $(*)$ above. But $G$ is Hopfian, so $D = \{0\}$ and hence $G$ is Bassian, and thus $\mathcal{X}^s \subseteq \mathcal{B}$. Since we also have that $\mathcal{B} \subseteq \mathcal{X}$, it follows that $$\mathcal{B}^s \subseteq \mathcal{X}^s = (\mathcal{X}^s)^s \subseteq \mathcal{B}^s,$$ so that $\mathcal{X}^s = \mathcal{B}^s$; in particular, $\mathcal{H}^s = \mathcal{B}^s$. So, we have established our next major statement:

\begin{theorem}\label{bxhcase} If $\B \subseteq \X \subseteq \mathcal{H}$, then a group $G$ is in $\X^s$ if, and only if, it is super Bassian. In particular, a group is super Hopfian if, and only if, it is super Bassian.
\end{theorem}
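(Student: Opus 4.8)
The plan is to reduce both assertions to the single identity $\X^s = \B^s$; once that is established, the forward and backward implications of the biconditional, as well as the special case $\X = \mathcal{H}$, follow immediately. The two tools I would rely on are the structural normal form $(*)$ obtained above for members of $\DF^s$, and the purely formal closure properties of the super operation recorded in Proposition~\ref{scprop}.

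First I would note that $\X \subseteq \mathcal{H} \subseteq \DF$, so Proposition~\ref{scprop}(i) gives $\X^s \subseteq \mathcal{H}^s \subseteq \DF^s$. Consequently every $G \in \X^s$ lies in $\DF^s$ and therefore, by $(*)$, splits as $G = B \oplus D$ with $B$ Bassian and $D = \bigoplus_{p} \qc^{(n_p)}$, each $n_p$ finite. The crux is to show that $D = \{0\}$. Here I would use that $G \in \X \subseteq \mathcal{H}$ is Hopfian: if $D$ were nonzero it would contain a copy of $\qc$ as a direct summand, and then the endomorphism of $G$ that multiplies that summand by $p$ and fixes the complementary summand is surjective (by divisibility) yet has nonzero kernel, contradicting Hopficity. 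Hence $D = \{0\}$, so $G = B$ is Bassian, and thus $\X^s \subseteq \B$.

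To promote $\X^s \subseteq \B$ to $\X^s \subseteq \B^s$ I would apply the super operation to this inclusion: monotonicity (Proposition~\ref{scprop}(i)) yields $(\X^s)^s \subseteq \B^s$, while idempotence (Proposition~\ref{scprop}(iii)) gives $(\X^s)^s = \X^s$, whence $\X^s \subseteq \B^s$. The opposite inclusion $\B^s \subseteq \X^s$ follows at once from $\B \subseteq \X$ and Proposition~\ref{scprop}(i). Combining the two yields $\X^s = \B^s$, which says precisely that $G \in \X^s$ if and only if $G$ is super Bassian; taking $\X = \mathcal{H}$ then delivers the ``in particular'' statement $\mathcal{H}^s = \B^s$.

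I do not expect a genuine obstacle, since the substantive analysis has already been absorbed into $(*)$ (through Corner's extension of Szele's Theorem and the rank arguments of the preceding subsections). The only step that rewards care is the passage from $\X^s \subseteq \B$ to $\X^s \subseteq \B^s$: one must not argue this directly, because $\B^s$ is a proper subclass of $\B$, but rather apply the super operation to the inclusion and then invoke its idempotence, exactly as above.
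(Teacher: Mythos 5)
Your argument is correct and coincides with the paper's own proof: both use the inclusion $\X^s \subseteq \mathcal{H}^s \subseteq \DF^s$ together with the normal form $(*)$ to force $D = \{0\}$ via Hopficity (you merely make explicit the multiplication-by-$p$ endomorphism the paper leaves implicit), and both then obtain $\X^s = \B^s$ from the chain $\B^s \subseteq \X^s = (\X^s)^s \subseteq \B^s$ using monotonicity and idempotence of the super operation. No gaps.
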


Recall that, in \cite{DG}, a group $G$ was said to satisfy {\it property} $( \mathcal{P})$ if no quasi-cyclic group is an epimorphic image of $G$. Handling this notion, a classification of super Bassian groups was given in \cite[Theorem 2.5]{DG}.

Specifically, a torsion group is super Bassian if, and only if, it is reduced Bassian and so a torsion group is super Bassian if, and only if, each of its primary components is finite; in the other vein, a torsion-free group is super Bassian if, and only if, it is Bassian and has property $(\mathcal{P})$. Thus, a torsion-free group $G$ is super Bassian if, and only if, it has finite rank and, for every prime $p$, the localization $G_{(p)}$ is a free $\Z_{(p)}$- module. (For further equivalent conditions relating to property $(\mathcal{P})$ we refer to the discussion at the end of Section 2 in \cite{DG}.) For mixed groups $G$, a similar characterization in terms of property $(\mathcal{P})$ holds: in fact, $G$ is super Bassian if, and only if, it is Bassian and has property $(\mathcal{P})$.

\medskip

Suppose now that $\mathcal{X}$ is any class such that $\mathcal{EB} \subseteq \mathcal{X} \subseteq \mathcal{DF}$.
Our objective is to give a concrete description of all such super classes $\mathcal{X}^s$ in this setting. The next result is key.

\begin{proposition}\label{key} Suppose $G = B \oplus D$, where $B$ is a Bassian group and $D = \bigoplus\limits_p \qc^{(n_p)}$ with each $n_p$ finite. Then, any epimorphic image $X$ of $G$ is again of the form $X = B_1 \oplus D_1$, where $B_1$ is Bassian and $D_1 = \bigoplus\limits_p \qc^{(m_p)}$ with each $m_p$ finite. Equivalently, $\EB = \EB^s$. \end{proposition}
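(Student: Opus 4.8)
The plan is to reduce the statement to the single assertion that every epimorphic image of a Bassian group lies in $\EB$, and then to control the $p$-ranks of such an image by means of Proposition~\ref{useful}. First I would split off the divisible part of the image: if $\phi\colon G=B\oplus D\twoheadrightarrow X$, then $\phi(D)$ is divisible, being a quotient of the divisible group $D$, and in fact torsion divisible of the form $\bigoplus_p\qc^{(m'_p)}$ with each $m'_p\le n_p$ finite. Divisibility makes $\phi(D)$ a direct summand, say $X=\phi(D)\oplus Y$. Exactly as in the proof of Proposition~\ref{redplusdiv}, the composite $G\twoheadrightarrow X\twoheadrightarrow X/\phi(D)\cong Y$ annihilates $D$, so $Y$ is an epimorphic image of $G/D\cong B$. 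Hence it suffices to show that an epic image $Y$ of a Bassian group $B$ belongs to $\EB$; recombining $\phi(D)$ with the torsion-divisible summand produced for $Y$ will then finish the argument.

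The heart of the matter is to bound the ranks of $Y=\beta(B)$. Here I would use that a Bassian group has finite primary components $t_p(B)$ and, by \cite[Proposition~3.11]{CDG}, a torsion-free quotient $B/t(B)$ of some finite rank $n$. Writing $T=t(B)$, the image $\beta(T)=\bigoplus_p\beta(t_p(B))$ has finite primary components, while $Y/\beta(T)$ is an epic image of the finite-rank torsion-free group $B/T$. Proposition~\ref{useful} then embeds $Y/\beta(T)$ into $\Q^{(n)}\oplus D'$ with each primary component of $D'$ of rank at most $n$, so $\dim(Y/\beta(T))[p]\le n$ for every $p$. Applying the left-exact functor $\Hom(\Z/p,-)$ to $0\to\beta(T)\to Y\to Y/\beta(T)\to 0$ gives $\dim Y[p]\le\dim\beta(t_p(B))[p]+n<\infty$, and plainly $r_0(Y)=r_0(Y/\beta(T))\le n$. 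Thus $Y$ has finite torsion-free rank and finite $p$-ranks for all primes $p$.

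With all ranks finite I would reconstruct the $\EB$-structure. Split $Y=d(Y)\oplus R$ with $R$ reduced; finiteness of the ranks forces $d(Y)=\Q^{(a)}\oplus\bigoplus_p\qc^{(c_p)}$ with $a$ and all $c_p$ finite. For the reduced part, each $t_p(R)$ is a reduced $p$-group with finite socle (since $\dim R[p]<\infty$), hence finite (a reduced $p$-group with finite socle has a finite, bounded basic subgroup, with which it must therefore coincide); moreover $R/t(R)$ is torsion-free of finite rank. By the classification in \cite[Main Theorem]{CDG}, $R$ is Bassian, and then $B_1:=\Q^{(a)}\oplus R$ is Bassian by \cite[Main Theorem~(ii)]{CDG}. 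Writing $D_1:=\bigoplus_p\qc^{(c_p)}$ we obtain $Y=B_1\oplus D_1\in\EB$. Recombining, $X=B_1\oplus(\phi(D)\oplus D_1)$, where $\phi(D)\oplus D_1$ is torsion divisible with finite primary ranks, so $X\in\EB$; since $\EB^s\subseteq\EB$ is trivial, this yields $\EB=\EB^s$.

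The main obstacle is the rank bound of the second paragraph: the $p$-rank is not monotone under epimorphisms (for instance $\Q\twoheadrightarrow\qc$ raises it from $0$ to $1$), so the crux is to see that its growth is governed entirely by the finite torsion-free rank of $B$. Proposition~\ref{useful} is precisely what quantifies this, and it is the reason the torsion-divisible part of any image stays of finite rank. The only other delicate point is the elementary structural fact that a reduced $p$-group with finite socle is finite, which is what converts the finiteness of the $p$-ranks back into the finite primary components demanded by the definition of $\EB$.
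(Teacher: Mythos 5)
Your proof is correct, and while it follows the same overall skeleton as the paper's --- split off a divisible summand of the image, reduce to an epimorphic image of the Bassian part, bound all the ranks of that image, and reassemble via the Main Theorem of \cite{CDG} --- it handles the crucial rank-bounding step by a genuinely different route. The paper first rewrites $B=A\oplus\Q^{(n)}$ with $A$ reduced Bassian, absorbs $\Q^{(n)}$ into the divisible piece $K=\Q^{(n)}\oplus D$, and then, for each prime $p$, chooses an auxiliary subgroup $A^p\le A$ mapping onto the $p$-torsion of the remaining summand so as to invoke Lemma~\ref{epicfin} (which the paper flags as the crucial technicality). You bypass Lemma~\ref{epicfin} and the subgroups $A^p$ altogether: you apply Proposition~\ref{useful} to $Y/\beta(T)$, an image of the finite-rank torsion-free group $B/T$, and transfer the bound to $Y$ through the left-exact sequence $0\to\beta(T)[p]\to Y[p]\to (Y/\beta(T))[p]$. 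Since Lemma~\ref{epicfin} is itself deduced from Proposition~\ref{useful}, your route is somewhat more economical and avoids an arbitrary choice; the price is the extra observation that a reduced $p$-group with finite socle is finite, which you state and justify correctly, and which the paper in effect delegates to its citation of the Main Theorem of \cite{CDG}. One small point, shared with the paper's own write-up: the assertion that an epimorphic image of $\qc^{(n_p)}$ is $\qc^{(m'_p)}$ with $m'_p\le n_p$ is used but not proved; it is standard (for instance, apply the exact functor $\Hom(-,\qc)$ to turn a surjection of finite-rank divisible $p$-groups into an injection of free modules over the $p$-adic integers), and since your final decomposition of $X$ needs $r_p(\phi(D))$ finite as well as $r_p(Y)$ finite, a one-line justification would make the argument fully self-contained.
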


\begin{proof} Let $B = A \oplus \Q^{(n)}$, where $A$ is a reduced Bassian group and $n$ is finite; notice that, in accordance with the classification of Bassian groups given in \cite[Main Theorem]{CDG}, this is possible. Put $K := \Q^{(n)} \oplus D$ and let $\phi$ be an arbitrary, but fixed, epimorphism $\phi: G \twoheadrightarrow X$. Thus, $\phi(K)$ has the form $\Q^{(k)} \oplus \bigoplus\limits_p \qc^{(k_p)}$, where $k, k_p$ are all finite. Hence, $X = \phi(K) \oplus Y$, where $$Y \cong X/\phi(K) = (\phi(A) + \phi(K))/\phi(K) \cong \phi(A)/(\phi(A) \cap \phi(K));$$ in particular, $Y$ is an epimorphic image of the reduced Bassian group $A$.
	
Let $Y = \Q^{(t)} \oplus Z$, where $Z$ has no torsion-free divisible subgroups; note that $t$ is finite since $Y$, and hence $\Q^{(t)}$, is an epimorphic image of $G$ which has finite torsion-free rank.
	
Now, consider $Z$ and, for each prime $p$, let $Z_p$ denote the $p$-torsion subgroup of $Z$; notice that $Z$ is again an epimorphic image of the reduced Bassian group $A$, say $Z = \alpha (A)$ for some epimorphism $\alpha$. Since $\alpha$ is an epimorphism, for each prime $p$, there is a subgroup, $A^p$ say, such that $\alpha(A^p) = Z_p$. It is readily seen that $A^p$ is a subgroup of the reduced Bassian group $A$, and it follows from the classification of reduced Bassian groups proven in \cite[Main Theorem]{CDG} that $A^p$ itself is too reduced Bassian. Consulting with Lemma \ref{epicfin}, we see that $r_p(Z_p)$ is finite for all primes $p$, and so $r_p(Z)$ is finite for all primes $p$.
	
Now, let $Z = \bigoplus\limits_p \qc^{(t_p)} \oplus W$, where $W$ has no torsion divisible components whence $W$ is reduced since it is also a direct summand of the group $Z$, which has no torsion-free divisible subgroups. Furthermore, as $r_p(Z)$ is finite, we deduce that $t_p$ is finite and $r_p(W)$ is also finite for all primes $p$; observe too that $r_0(W)$ must be finite as it is an epic image of $G$ which has finite torsion-free rank.
	
Collecting terms, we see that $$X = \Q^{(k)} \oplus \bigoplus\limits_p \qc^{(k_p)} \oplus \Q^{(t)} \oplus \bigoplus\limits_p \qc^{(t_p)} \oplus W,$$  	
so that one easily checks that $X = (\Q^{(k+t)} \oplus W) \oplus \bigoplus\limits_p \qc^{(k_p + t_p)}$. But, $W$ is reduced and all of the ranks $r_p(W), r_0(W)$ are finite, so it again follows from \cite[Main Theorem (i)]{CDG} that $W$ is reduced Bassian; applying part (ii) of that same classification, we observe that $\Q^{(k+t)} \oplus W$ is Bassian. And finally, if we set $W := B_1$ and $D_1 := \bigoplus\limits_p \qc^{(k_p + t_p)}$, we get the desired result.
\end{proof}

Furthermore, since $\mathcal{DF}^s \subseteq \mathcal{EB}$, we see that
$$\mathcal{EB}^s \subseteq \mathcal{DF}^s = (\mathcal{DF}^s)^s \subseteq \mathcal{EB}^s \subseteq \mathcal{X}^s \subseteq \mathcal{DF}^s.$$ So, we find that $\mathcal{X}^s = \mathcal{DF}^s = \mathcal{EB}^s$.
However, Proposition \ref{key} above is just the statement that $\mathcal{EB}^s = \mathcal{EB}$ and so, after all, we have established the wanted classification.

\begin{theorem}\label{xebscase} If $\mathcal{EB} \subseteq \mathcal{X} \subseteq \mathcal{DF}$, then a group $G$ is in $\X^s$ if, and only if, it is in $\mathcal{EB}$.
\end{theorem}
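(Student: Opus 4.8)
The plan is to reduce everything to two facts that are already in hand: the structure theorem $(*)$ asserting that every super directly finite group is extended Bassian, i.e. $\DF^s \subseteq \EB$, and Proposition \ref{key}, which records the self-improving identity $\EB^s = \EB$. Granting these, the theorem will follow from a short chain of inclusions driven purely by the formal properties of the super operation collected in Proposition \ref{scprop}; no new group-theoretic work is needed at this final stage.

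First I would note that the hypothesis $\EB \subseteq \X \subseteq \DF$, together with the monotonicity in Proposition \ref{scprop}(i), yields the nesting $\EB^s \subseteq \X^s \subseteq \DF^s$. Hence it suffices to prove that the two outer classes coincide, namely $\EB^s = \DF^s$: for then $\X^s$ is squeezed between two equal classes and must equal both.

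Next, to establish $\DF^s = \EB^s$, the only nontrivial inclusion is $\DF^s \subseteq \EB^s$, the reverse being immediate from monotonicity applied to $\EB \subseteq \DF$. Here I would invoke the structure theorem $(*)$, which gives $\DF^s \subseteq \EB$. Applying the super operation to both sides via Proposition \ref{scprop}(i), and using the idempotence $(\DF^s)^s = \DF^s$ from Proposition \ref{scprop}(iii), I obtain $\DF^s = (\DF^s)^s \subseteq \EB^s$. Combined with $\EB^s \subseteq \DF^s$ this forces $\DF^s = \EB^s$, and therefore $\X^s = \EB^s$. Finally, Proposition \ref{key} is precisely the statement $\EB^s = \EB$, so $\X^s = \EB$, as wanted.

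The chain itself is entirely formal; all of the genuine content is hidden inside the two imported facts, and the main obstacle is Proposition \ref{key} rather than the theorem proper. That is where the work lies: one must show that the extended Bassian form $B \oplus \bigoplus_p \qc^{(n_p)}$ is preserved under an arbitrary epimorphism. This requires splitting off the image of the divisible-and-rational part $K = \Q^{(n)} \oplus D$, bounding the torsion-free rank and each $p$-rank of the remaining reduced quotient by means of Proposition \ref{useful} and Lemma \ref{epicfin}, and then reassembling the pieces, appealing to the Main Theorem classification of Bassian groups in \cite{CDG} to certify that the reduced summand is again Bassian and that adjoining finitely many copies of $\Q$ keeps it so.
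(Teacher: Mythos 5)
Your argument is correct and is essentially identical to the paper's own proof: the authors likewise squeeze $\X^s$ between $\EB^s$ and $\DF^s$, use the structure statement $(*)$ together with the idempotence $(\DF^s)^s = \DF^s$ to get $\DF^s \subseteq \EB^s$, and then conclude via Proposition \ref{key} that $\EB^s = \EB$. Your sketch of where the real work lies (Proposition \ref{key}) also matches the paper's strategy.
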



\section{Concluding Remarks and Open Problems}

Exploiting the criterion proven in \cite{CDK} for a $p$-group to be directly finite, it plainly follows that a directly finite $p$-group having all subgroups Bassian must also be Bassian. Nevertheless, our results illustrated above considerably expand this helpful observation.

\medskip

On the other side, in the context of subclasses of arbitrary directly finite groups, it is clear that the imposition of closure under subgroups or homomorphic images is an extremely strong restriction: classes which are proper classes in the set-theoretical sense can actually be reduced to countable sets. Our current approach does not, however, handle every situation involving directly finite groups, so we close our work by posing the following pair of possible difficult questions.

\bigskip

\noindent{\bf Question 1.} Can the class $\X_h$ be completely characterized in the situation where $\X \subseteq \DF$ but $\X \nsubseteq \mathcal{H}$ and $\EB \nsubseteq \X$?

\medskip

\noindent{\bf Question 2.} Can the class $\X^s$ be completely characterized in the situation where $\X \subseteq \DF$ but $\X \nsubseteq \mathcal{H}$ and $\EB \nsubseteq \X$?

\vskip3.0pc

\end{document}